\newtheorem{theorem}{Theorem}[section]
\newtheorem{assumption}[theorem]{Assumption}
\newcommand{\E}{\mathbb{E}}
\newcommand{\K}{\mathcal{K}}
\newcommand{\widebar}{\overline}
  \providecommand\BibTeX{{%
    \normalfont B\kern-0.5em{\scshape i\kern-0.25em b}\kern-0.8em\TeX}}}
\begin{document}

\title{Contextual Ranking and Selection with Gaussian Processes}

\author{Sait Cakmak}
\orcid{0000-0001-5845-8506}
\affiliation{%
  \institution{Georgia Institute of Technology}
  \department{School of Industrial and Systems Engineering}
  \streetaddress{755 Ferst Drive NW}
  \city{Atlanta}
  \state{Georgia}
  \country{USA}
  \postcode{30332}
}
\email{scakmak3@gatech.edu}

\author{Siyang Gao}
\orcid{0000-0002-3574-6393}
\affiliation{%
  \institution{City University of Hong Kong}
  \department{Department of Advanced Design and Systems Engineering}
  \streetaddress{83 Tat Chee Avenue}
  \city{Kowloon}
  \state{Hong Kong}
  \country{CHINA}
}
\email{siyangao@cityu.edu.hk}

\author{Enlu Zhou}
\orcid{}
\affiliation{%
  \institution{Georgia Institute of Technology}
\department{School of Industrial and Systems Engineering}
  \streetaddress{755 Ferst Drive NW}
  \city{Atlanta}
  \state{Georgia}
  \country{USA}
  \postcode{30332}
}
\email{enlu.zhou@isye.gatech.edu}

\renewcommand{\shortauthors}{Cakmak, Gao, and Zhou}

\begin{abstract}
  In many real world problems, we are faced with the problem of selecting the best among a finite number of alternatives, where the best alternative is determined based on context specific information. In this work, we study the contextual Ranking and Selection problem under a finite-alternative-finite-context setting, where we aim to find the best alternative for each context. We use a separate Gaussian process to model the reward for each alternative, and derive the large deviations rate function for both the expected and worst-case contextual probability of correct selection. 
  We propose the GP-C-OCBA sampling policy, which uses the Gaussian process posterior to iteratively allocate observations to maximize the rate function. We prove its consistency and show that it achieves the optimal convergence rate under the assumption of a non-informative prior.
  Numerical experiments show that our algorithm is highly competitive in terms of sampling efficiency, while having significantly smaller computational overhead.
\end{abstract}


\keywords{}  

\maketitle

\section{INTRODUCTION}
\label{sec:intro}
Ranking \& Selection (R\&S) studies the problem of identifying the best among a finite number of alternatives, where the true performance of each alternative is only observed through noisy evaluations. The settings of R\&S can be typically categorized into fixed confidence and fixed budget. In the fixed-confidence setting, the goal is to achieve a target probability of correct selection ($PCS$) of the best alternative using as few evaluations as possible, while in the fixed-budget setting one aims to achieve a $PCS$ as high as possible with the given sampling budget.
The R\&S problem has been studied extensively over past few decades, and we refer the reader to \cite{Kim2007RSoverview} and \cite{Chen2015RSoverview} for an overview.

In certain applications, the best alternative may not be the same across the board, and may depend on the underlying {\it context}. The benefit of making context-dependent decisions is easily seen by a simple application of Jensen's inequality: $\E_c[\max_k F(k; c)] \geq \max_k \E_c[F(k; c)]$, where $F(k;c)$ represents the reward of selecting alternative $k$ for the context $c$, and $\E_c[\cdot]$ denotes the expectation with respect to (w.r.t.) $c$. Examples of context-dependent decision making include personalized medicine, where the best drug and dose may depend on the patient's age, gender, and medical history; and recommender systems, where personalized decisions have been the focus of study for over a decade \cite{Nunes2012Recommender}. Context-dependent decision making also arises in R\&S. For example, based on a set of forecasted market conditions (contexts), we can identify a set of alternative configurations of a complex manufacturing system, which can be simulated under any given context to determine the most profitable configuration to use when the market conditions are realized.

In this work, we study the contextual R\&S problem, in which the rewards are a function of the context.
Our goal is to identify the best alternative for each context under a fixed budget.
Much like the classical R\&S problem, at each iteration, the decision maker selects an alternative-context pair to evaluate and observes a noisy evaluation of the true reward. With a finite sampling budget and noisy observations,
it is not possible to identify the best alternative with certainty, and
we need to design a sampling policy, which takes in the current estimate of rewards and outputs the next alternative-context pair to sample, in order to achieve
the highest possible ``aggregated" $PCS$
when the budget is exhausted.   
The aggregation is needed because in the contextual R\&S, for any sampling policy, $PCS$ is also context dependent, i.e., for each context $c$ there is a  $PCS(c)$.  This defines multiple objectives to consider when designing the sampling policy.
In this work, we consider two approaches to aggregate $PCS(c)$'s to a scalar objective. The first one is the expected $PCS$ \cite{Gao2019Covariates,shen2021ranking}, denoted as $PCS_E$, which is the expectation or the weighted average of $PCS(c)$ given a set of normalized weights, and the second one is the worst-case $PCS$ \cite{li2020contextdependent}, denoted as $PCS_M$, which is the minimum $PCS(c)$ obtained across all contexts.

The contextual R\&S problem has seen an increasing interest in past few years. Notable works that study this problem under finite alternative-context setting include but not limited to \cite{Gao2019Covariates, Jin2019Analytics, li2020contextdependent, shen2021ranking}. \cite{li2020contextdependent} focuses on worst-case $PCS$, uses independent normal random variables to model rewards, and proposes a one-step look-ahead policy with an efficient value function approximation scheme to maximize $PCS_M$. \cite{shen2021ranking} assumes that the reward for each alternative is a linear function of the context and proposes a two-stage algorithm based on the indifference zone formulation for optimizing the expected $PCS$. The most closely related work to ours is \cite{Gao2019Covariates}. They model the rewards using independent normal random variables, extend the analysis in \cite{Glynn2004LargeDev} to derive the large deviations rate function for the contextual $PCS$, and propose an algorithm that obtains the asymptotically optimal allocation ratio for both the worst-case and expected $PCS$. 
In contrast, we use a Gaussian process, model correlations between contexts, and extend the large deviations approach to leverage the efficient inference brought in by our statistical model.
\cite{Jin2019Analytics} also follows a large deviations approach similar to that of \cite{Gao2019Covariates}; however, their algorithm does not perform well when only the observed data is used to make decisions.

In this work, we use a separate Gaussian process (GP) to model the reward function for each alternative. By leveraging the hidden correlation structure within the reward function, GPs offer significant improvements in posterior inference over independent normal random variables, which are commonly used in the R\&S literature. Due to the finite solution space we focus on, when compared to a simpler multi-variate Gaussian prior, GPs may appear to complicate things by introducing kernels, which are typically used in continuous spaces. We prefer GPs since they have hyper-parameters that can be trained to better fit the observations as the optimization progresses. In contrast, a multi-variate Gaussian prior is a static object that needs to be specified beforehand based on limited domain knowledge.
The contributions of our paper are summarized as follows:
\begin{itemize}
    \item Using the posterior mean of the GP as the predictor of the true rewards and leveraging a novel decomposition of the GP update formulas, we derive the large deviations rate function for the contextual PCS, and show that it is identical for both $PCS_E$ and $PCS_M$.
    
    \item We propose a sequential sampling policy that aims to maximize the rate function, and uses the GP posterior mean and variance to select the next alternative-context to sample. 
    Our sampling policy, GP-C-OCBA, is based on the same idealized policy as the C-OCBA policy \cite{Gao2019Covariates}, and mainly differs in the statistical model and the predictors used. 
    
    \item Under a set of mild assumptions, we show that GP-C-OCBA almost surely identifies the best alternative for each context as the sampling budget goes to infinity. Under the additional assumption of independent priors, we show that the sampling allocations produced by GP-C-OCBA converge to the optimal static allocation ratio.
    
    \item We build on this to establish, to the best of our knowledge, the first convergence rate result for a contextual R\&S procedure in the literature, We show that the resulting contextual PCS converges to $1$ at the optimal exponential rate. 
    
    \item Finally, we demonstrate the performance of GP-C-OCBA in numerical experiments. We see that GP-C-OCBA achieves significantly improved sampling efficiency when compared with C-OCBA \cite{Gao2019Covariates}, DSCO \cite{li2020contextdependent}, TS, and TS+ \cite{shen2021ranking}; and is highly competitive against the integrated knowledge gradient (IKG) algorithm \cite{PEARCE2018IKG-REVI}, which uses the same GP model but requires significantly larger computational effort to decide on the next alternative-context to evaluate.
\end{itemize}

\section{PROBLEM FORMULATION} \label{section-formulation}
We consider a finite set of alternatives $\{ k \in \K \}$ and a finite set of contexts $\{c \in \mathcal{C} \subset \mathbb{R}^d \}$, where $d$ is the dimension of the context variable. 
We assume that $\K$ is a set of categorical inputs, i.e., that there is no metric defined over $\K$. 

The observations of the reward function, $F(k, c)$, is subject to noise, and the decision maker is given a total budget of $B$ function evaluations. The aim is to identify the true best alternative for each context,
\begin{equation*}
    \pi^*(c) := {\arg \max}_{k \in \mathcal{K}} F(k, c),
\end{equation*}
so that a decision can be made immediately once the final context is revealed. Since $F(k, c)$ is unknown and the observations are noisy, $\pi^*(c)$ cannot be identified with certainty using a finite budget. If we let $\mu_B(k, c)$ denote the estimate of $F(k, c)$ obtained using $B$ function evaluations, we can write the corresponding estimated best alternative as
\begin{equation*}
    \pi^B(c) := {\arg \max}_{k \in \mathcal{K}} \mu_B(k, c).
\end{equation*}
Note that $\pi^B(c)$ is determined by the outcome of the $B$ function evaluations, as well as the sampling policy used to allocate those $B$ evaluations. Thus, for any given sampling policy, $\pi^B(c)$ is a random variable. We can measure the quality of the given sampling policy, for any context c, by the corresponding probability of correct selection ($PCS$) after exhausting the budget $B$:
\begin{equation*}
    PCS^B(c) = P(\pi^{B}(c) = \pi^*(c)).
\end{equation*}

As discussed in the introduction, $PCS^B(c)$ for all contexts define multiple objectives to consider while designing a sampling policy, with each $PCS$ becoming larger as we allocate more samples to the corresponding $c$. Since we have a total sampling budget $B$ rather than individual budgets for each context, it makes sense to work with a scalar objective instead.
In the literature, there are two common approaches for constructing a scalar objective from $[PCS^B(c)]_{c \in \mathcal{C}}$. The first approach assumes that we are given a set of normalized weights $w(c)$ for each $c \in \mathcal{C}$ or the context variable follows a probability distribution $\{w(c), c\in\mathcal{C}\}$, and uses the expected $PCS$ \cite{Gao2019Covariates}
\begin{equation*}
    PCS^B_E = \E_{c \sim w(c)}[PCS^B(c)]
\end{equation*}
as the objective to be maximized. The other alternative takes a worst-case approach and aims to maximize the worst-case $PCS$ \cite{li2020contextdependent}
\begin{equation*}
    PCS^B_M = \min_{c \in \mathcal{C}} PCS^B(c).
\end{equation*}

We refer to either of $PCS^B_E$ and $PCS^B_M$ as the contextual $PCS$, and aim to design a sampling policy that maximizes the contextual $PCS$ with the given sampling budget $B$.
We propose an iterative approach that repeats the following steps at each iteration until the sampling budget is exhausted.
\begin{itemize}
    \item Use the reward observations collected so far to update the statistical model of the reward function.
    \item With the objective of maximizing the large deviations rate function, use the sampling policy to decide on next alternative-context, from which to sample one more observation. 
\end{itemize}
In the following sections, we introduce our statistical model, which builds on a Gaussian process (GP) that leverages the hidden correlation structure in the reward function for more efficient posterior inference, derive the large deviations rate function using the posterior mean of the GP as the predictor of the rewards, and introduce our sampling policy, which aims to maximize the large deviations rate function.

\section{STATISTICAL MODEL} \label{section-statistical-model}
Gaussian processes are a class of non-parametric Bayesian models that are highly flexible for modeling continuous functions.
By restricting to a discrete subset of the solution space, they also provide a powerful alternative to a multi-variate Gaussian prior for modeling a discrete set of designs. 
In this work, we use Gaussian processes to model the reward function. We use a separate GP for each alternative, which allows modeling the correlations between the reward function corresponding to each context for that alternative. 

Let $\mathcal{F}_n(k) = \{ D_n(k), Y_n(k) \}$ denote the history of observations corresponding to alternative $k$ up to time $n$ (i.e., $n$ total observations across all alternatives), where $D_n(k)$ and $Y_n(k)$ denote the set of contexts that have been evaluated and the corresponding observations, respectively. Given the history $\mathcal{F}_n(k)$ and a set of hyper-parameters $\theta$, the GP implies a multi-variate Gaussian posterior distribution on the reward function, given by:
\begin{equation*}
    F(k, \mathcal{C}) \mid \mathcal{F}_n, \theta \sim \mathcal{N}(\mu_n(k, \mathcal{C}), \Sigma_n(\mathcal{C}, \mathcal{C}; k));
\end{equation*}
where $\mu_n(k, \mathcal{C})$ and $\Sigma_n(\mathcal{C}, \mathcal{C}; k)$ are the posterior mean vector and covariance matrix, which, assuming Gaussian observation noise, can be computed in closed form as
\begin{equation*}
    \mu_n(k, \mathcal{C}) = \mu_0(k, \mathcal{C}) + \Sigma_0(\mathcal{C}, D_n(k); k) A_n^{-1}(k) (Y_n(k) - \mu_0(D_n(k)))^{\top},
\end{equation*}
\begin{equation*}
    \Sigma_n(\mathcal{C}, \mathcal{C}; k) = \Sigma_0(\mathcal{C}, \mathcal{C}; k) - \Sigma_0(\mathcal{C}, D_n(k); k) A_n^{-1}(k) \Sigma_0(D_n(k), \mathcal{C}; k),
\end{equation*}
with $A_n(k) = \Sigma_0(D_n(k), D_n(k); k) + diag(\sigma^2(k, D_n(k)))$, where $\sigma^2(\cdot, \cdot)$ denotes the known standard deviation of the Gaussian observation noise. 
The prior mean, $\mu_0$, is commonly set to a constant, e.g., $\mu_0(\cdot, \cdot) = 0$, and the prior covariance, $\Sigma_0$, is commonly chosen from popular covariance kernels such as squared exponential,
    $\Sigma_0(c, c'; k \mid \theta) = \theta_0 \exp \left( - \frac{1}{2} dist^2 \right)$, 
and Mat\`ern,
\begin{equation*}
    \Sigma_0(c, c'; k \mid \theta) = \theta_0 \frac{2^{1-\nu}}{\Gamma(\nu)}\left(\sqrt{2 \nu} dist \right)^{\nu} K_{\nu}\left(\sqrt{2 \nu} dist \right),
\end{equation*}
where $dist = \sqrt{\langle \theta_{1:d} (c - c'), c - c' \rangle }$ is the coordinate-wise scaled Euclidean distance,
$\Gamma(\cdot)$ is the gamma function, $K_{\nu}(\cdot)$ is the modified Bessel function of second kind, $\nu$ is a shape parameter that is commonly set to $\nu = 5/2$.
$\theta_0$ and $\theta_{1:d}$ denote the output-scale and the length-scale parameters, respectively, and they are collectively referred to as the hyper-parameters of the GP. Throughout, we keep the dependence on the hyper-parameters $\theta = \{\theta_0, \theta_{1:d}\}$ implicit in the notation.
The observation noise level, $\sigma^2(\cdot, \cdot)$, is commonly unknown and gets replaced with a plug-in estimate, which is
optimized jointly with the hyper-parameters $\theta$, using maximum likelihood or maximum a-posteriori estimation. In addition to $\mu_n(k, c)$ and $\Sigma_n(c, c'; k)$ to denote the posterior mean and covariance, we use $\Sigma_n(k, c) := \Sigma_n(c, c; k)$ to denote the posterior variance.

In contrast to the common practice of using a single GP defined over the whole design space (i.e., the product space of alternative and contexts),
we model the rewards for each alternative using an independent GP model, which is defined over the context space $\mathcal{C}$ and trained using only the observations corresponding to that alternative. There are a couple of reasons for using an independent GP model for each alternative. 
\begin{itemize}
    \item With $\K$ as a set of categorical inputs, we do not have a metric defined over $\K$. Thus, we cannot use a covariance kernel with the categorical alternative values as the inputs. It is possible to define a latent embedding of $\K$ into a Euclidean space and apply a covariance kernel in the embedded space (see, e.g., \cite{Guo2016Embedding,Feng2020ContextualBO}). However, this introduces many additional hyper-parameters to the model, resulting in a non-convex optimization problem with many local optima for training the model. We found the predictive performance of such models to be highly sensitive to initial values of these hyper-parameters.
    
    \item The complexity of the GP inference is dominated by the inversion of matrix $A_n$, which has a $\mathcal{O}(n^3)$ cost for an $n \times n$ matrix using standard techniques. If we use use a single GP fit on all $n$ observations, this results in an $n \times n$ matrix $A_n$. In contrast, when using $K := |\K|$ independent GP models, each with $N_n(k)$ training inputs, we have $K$ matrices $A_n(k)$, each of size $N_n(k) \times N_n(k)$, with $\sum_{k} N_n(k) = n$. The GP inference with independent models has a total cost of $\mathcal{O}(\sum_k n_k^3)$. If we assume that the samples are evenly distributed across alternatives, i.e., $N_n(k) = n / K$, this results in a $\mathcal{O}(n^3 / K^2)$ cost of inference for $K$ independent models, which is much cheaper than $\mathcal{O}(n^3)$ for a single GP model.
\end{itemize}
On the other hand, if the set of alternatives belongs to a metric space, using a single GP model with a well defined covariance kernel over alternatives could lead to better sampling efficiency, and may be preferable when the samples are expensive or the sampling budget is severely limited. Although our derivation utilizes the independence of the models across different alternatives, the resulting GP-C-OCBA algorithm presented in this work is agnostic to the specifics of the GP model, and it can be used with either a single GP defined over the alternative-context space or a GP model with the latent embedding as discussed in the first point, whenever such models are found to be appropriate.


\begin{figure}[ht]
    \centering
    \includegraphics[width=\textwidth]{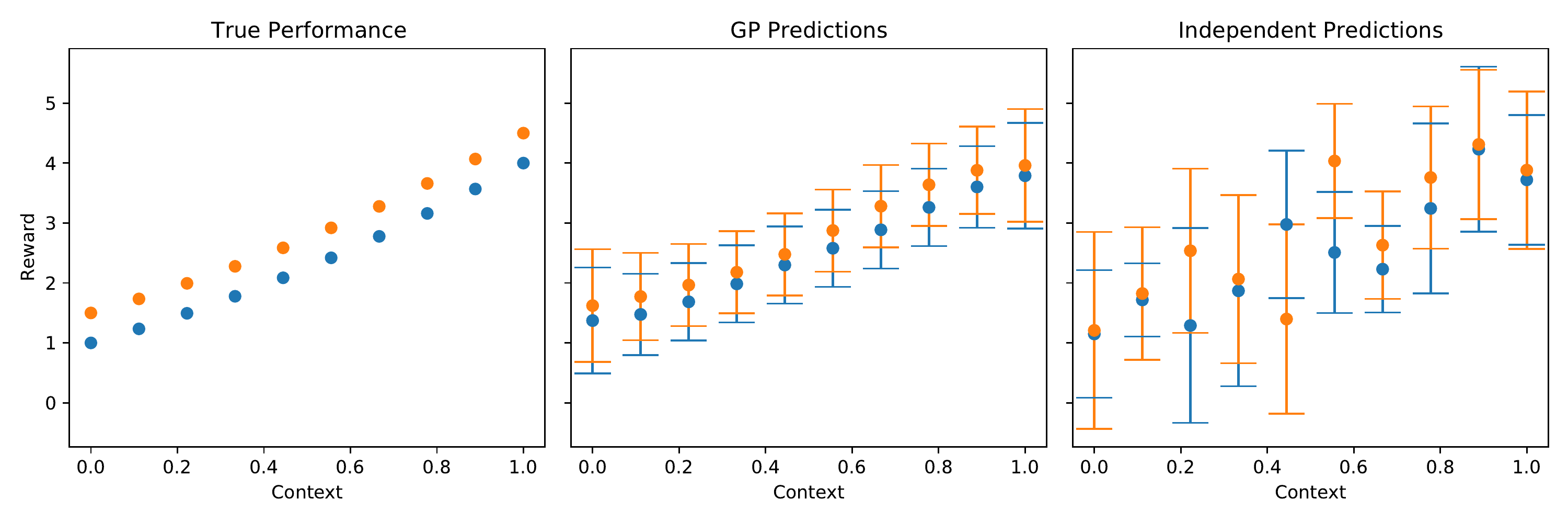}
    \caption{Comparison of the GP predictions with those of an independent frequentist model. Left to right, the plots show the true performances of the two alternatives (y-axis) as a function of the contexts (x-axis), the GP predictions and the independent predictions after sampling each alternative-context pair $10$ times. The error bars denote the two standard deviations of the predictive variance. It is seen that the GP model smooths out the errors and leads to much more reliable predictions and lower predictive variance.}
    \label{fig:gp_example}
\end{figure}

In Figure \ref{fig:gp_example}, we see a comparison of the predictions of the GP model used in this paper, and the independent model used by \cite{Gao2019Covariates}. Both models are fitted on identical data, and the posterior mean is plotted along with error bars denoting two standard deviations of the predictive variance. The GP model smooths out the errors in the observations and leads to significantly more accurate predictions. In addition, we see that the error bars for the GP predictions are significantly smaller, which is attributed to the model using neighboring observations to lower the predictive uncertainty.

\section{THE LARGE DEVIATIONS RATE FUNCTION} \label{section-derivation}
In this section, we derive the large deviations rate function corresponding to the contextual $PCS$ measures. In particular, we calculate the rate at which the probability of false selection,
\begin{equation*}
    PFS^n_E = 1 - PCS^n_E \text{ or } PFS^n_M = 1 - PCS^n_M,
\end{equation*}
approaches zero as the number of samples $n \rightarrow \infty$. The main result is summarized in the following theorem.

\begin{theorem}\label{thm-pcs-rate-function}
    Suppose that the observations are given as $y^n(k, c) = F(k, c) + \epsilon^n(k, c)$ where $\epsilon^n(k, c) \sim \mathcal{N}(0, \sigma^2(k, c))$ and $\epsilon^n(k, c)$ are independent across $n, k, c$;
    and the best alternative, $\pi^*(c)$, is unique for all $c \in \mathcal{C}$. 
    Using $\mu_n(k, c)$ to predict the rewards, the large deviations rate function for both $PCS^n_E$ and $PCS^n_M$ is given by
    \begin{equation} \label{eq-final-rate-function}
        \lim_{n \rightarrow \infty} \frac{1}{n} \log PFS^n_{\sim} = - \min_{c \in \mathcal{C}} \min_{k \neq \pi^*(c)} \frac{(F({\pi^*(c)}, c) - F(k, c))^2}{2(\sigma^2({\pi^*(c)}, c) / p(\pi^*(c), c) + \sigma^2(k, c) / p(k, c))},
    \end{equation}
    where $PFS^n_{\sim}$ is either of $PFS^n_E$ or $PFS^n_M$, and $p(k, c)$ denotes the fraction of samples allocated to $k, c$.
\end{theorem}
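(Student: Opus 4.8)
The plan is to reduce the contextual probability of false selection to a finite family of pairwise comparison events, identify each such event as a Gaussian tail probability, and apply the standard Gaussian large-deviations estimate. \textbf{Step 1 (reduction to pairwise comparisons).} For a fixed context $c$, a false selection occurs exactly when some suboptimal alternative's predictor overtakes that of $\pi^*(c)$, so
\[
  PFS^n(c) = P\!\left( \bigcup_{k \neq \pi^*(c)} \{ \mu_n(k,c) \geq \mu_n(\pi^*(c),c) \} \right).
\]
Because $\mathcal{C}$ and $\mathcal{K}$ are finite, $PFS^n_E = \sum_{c\in\mathcal{C}} w(c)\, PFS^n(c)$ and $PFS^n_M = \max_{c\in\mathcal{C}} PFS^n(c)$ are both squeezed between $\max_c PFS^n(c)$ and a fixed constant times $\max_c PFS^n(c)$ (for $PFS^n_E$, using $\min_c w(c) > 0$). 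Constant factors are invisible to $\frac1n\log(\cdot)$, so both aggregations share the same exponential rate, namely $\min_c\big(\lim_n \frac1n\log PFS^n(c)\big)$. Applying the same finite-union reasoning within a single context gives $\lim_n \frac1n \log PFS^n(c) = \min_{k \neq \pi^*(c)} \lim_n \frac1n \log P(\mu_n(k,c) \geq \mu_n(\pi^*(c),c))$: the union bound yields the lower bound on the rate, and keeping only the single slowest-decaying pair yields the matching upper bound.

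\textbf{Step 2 (asymptotics of the GP predictor).} This is the technical core. Fixing a deterministic allocation with $N_n(k,c)/n \to p(k,c)$, where $N_n(k,c)$ is the number of evaluations of the pair $(k,c)$, I would use the decomposition of the GP update formulas to show that, conditionally on the design and the hyper-parameters, $\mu_n(k,c)$ is Gaussian with mean converging to $F(k,c)$ and with $n\,\Sigma_n(k,c) \to \sigma^2(k,c)/p(k,c)$. The mechanism is that, in information form, the posterior precision of the GP for alternative $k$ is $\Sigma_0^{-1}(k) + \operatorname{diag}\!\big(N_n(k,c)/\sigma^2(k,c)\big)_{c\in\mathcal{C}}$, whose diagonal data term grows like $n$ and dominates the $O(1)$ prior term; hence the prior washes out, the posterior covariance is asymptotically the inverse of the diagonal data-precision matrix (with off-diagonal and correction terms of order $1/n^2$), and the posterior mean is asymptotically the per-context noise-weighted sample average, which is unbiased for $F(k,c)$ with variance $\sigma^2(k,c)/N_n(k,c)$. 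Since the GPs for distinct alternatives are independent, $\mu_n(k,c) - \mu_n(\pi^*(c),c)$ is Gaussian with mean $\to F(k,c) - F(\pi^*(c),c) < 0$ (by uniqueness of $\pi^*(c)$) and variance $\sim \big(\sigma^2(k,c)/p(k,c) + \sigma^2(\pi^*(c),c)/p(\pi^*(c),c)\big)/n$.

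\textbf{Step 3 (Gaussian large-deviations estimate and assembly).} For $Z_n \sim \mathcal{N}(m_n, s_n^2/n)$ with $m_n \to m < 0$ and $s_n^2 \to s^2 > 0$, the tail expansion $1-\Phi(t) = \frac{\phi(t)}{t}(1+o(1))$ evaluated at $t = -m_n\sqrt{n}/s_n \to \infty$ gives $\log P(Z_n \geq 0) = -\frac{m_n^2 n}{2 s_n^2} + O(\log n)$, hence $\frac1n\log P(Z_n\geq 0) \to -m^2/(2 s^2)$. Applying this with $m = F(k,c) - F(\pi^*(c),c)$ and $s^2 = \sigma^2(\pi^*(c),c)/p(\pi^*(c),c) + \sigma^2(k,c)/p(k,c)$ yields the pairwise rate $\tfrac{(F(\pi^*(c),c) - F(k,c))^2}{2(\sigma^2(\pi^*(c),c)/p(\pi^*(c),c) + \sigma^2(k,c)/p(k,c))}$; taking $\min_{k\neq\pi^*(c)}$ and then $\min_{c\in\mathcal{C}}$, as in Step 1, produces exactly \eqref{eq-final-rate-function}. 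Since neither aggregation's rate depended on anything beyond $\max_c PFS^n(c)$, the rate function is identical for $PCS^n_E$ and $PCS^n_M$.

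\textbf{Main obstacle.} The delicate point is Step 2: controlling the GP update as the design accumulates many repeated evaluations at each context, so that the inverse of $A_n(k)$ (equivalently the posterior precision matrix) splits cleanly into a dominant diagonal data term plus an asymptotically negligible prior contribution, uniformly enough that the conditional mean and variance converge to the stated limits and all approximation errors are $o(1)$ on the $\frac1n\log$ scale. Related care is needed to justify treating $\mu_n(k,c)$ as Gaussian (by fixing, or invoking convergence of, the trained hyper-parameters) and to ensure $p(k,c) > 0$ for every pair carried by the allocation of interest, so that all denominators in \eqref{eq-final-rate-function} are finite.
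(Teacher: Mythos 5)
Your proposal is correct in outline and reaches the same result, but the core of the argument is carried out differently from the paper. Step 1 (sandwiching $PFS^n_E$ and $PFS^n_M$ between $\max_c\max_{k\neq\pi^*(c)}P(\mu_n(\pi^*(c),c)<\mu_n(k,c))$ and a constant multiple of it) is exactly the paper's reduction. Where you diverge is in how the pairwise rate is obtained: the paper decomposes the GP update sequentially, one context at a time (via Sherman--Morrison), shows that the scaled log-MGF of $(\mu_n(\pi^*(c),c),\mu_n(k,c))$ converges to the same limit as for sample means, and then invokes the G\"artner--Ellis / Glynn--Juneja machinery (Fenchel--Legendre transform, infimum over the half-space $x_{\pi^*}\geq x_k$) to extract the rate. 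You instead work in information (precision) form, argue that the $O(n)$ diagonal data-precision term swamps the $O(1)$ prior so that $\mu_n(k,c)$ is asymptotically the per-context sample mean, and then finish with an elementary Mills-ratio tail estimate for the exactly Gaussian difference $\mu_n(k,c)-\mu_n(\pi^*(c),c)$. Both routes hinge on the same key fact (asymptotic equivalence of the GP posterior mean with the sample mean in both mean and sampling variance), and your Step 3 is legitimate precisely because, with a fixed design and fixed hyper-parameters, $\mu_n$ is an affine function of independent Gaussian observations, so the heavier large-deviations apparatus is not strictly needed; indeed the paper itself uses your direct Gaussian-tail logic later in its convergence-rate theorem. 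What the paper's route buys is a derivation that plugs directly into the Glynn--Juneja framework and never needs $\Sigma_0(\mathcal{C},\mathcal{C};k)$ to be invertible, whereas your precision-form decomposition does (strict positive definiteness is only imposed later in the paper, in Assumption 5.1, not in this theorem); your route buys a shorter, more elementary endgame. Two small cautions: the quantity governing the tail is the frequentist sampling variance of $\mu_n(k,c)$, not the Bayesian posterior variance $\Sigma_n(k,c)$ --- you briefly identify the two (``$n\,\Sigma_n(k,c)\to\sigma^2(k,c)/p(k,c)$'' as the variance of the Gaussian), and while they do coincide asymptotically ($\Sigma_n B \Sigma_n \approx \Sigma_n \approx B^{-1}$ with $B$ the data precision), the argument should be phrased in terms of $\mathrm{Var}(\mu_n(k,c))$, as you do at the end of Step 2; and the error control you flag as the main obstacle (prior contributions being $o(1)$ after the $\frac1n\log$ scaling) is genuine work, essentially the same work the paper does through its sequential decomposition, but the mechanism you identify is sound, so the gap is one of detail rather than of approach.
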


The derivation of the result follows the ideas originating in \cite{Glynn2004LargeDev}, with modifications to accommodate the use of the posterior mean $\mu_n(k, c)$ instead of the sample mean.
We first show that the $PFS^n(c)$ can be upper and lower bounded by a constant multiple of $\max_{k \neq \pi^*(c)} P(\mu_n(\pi^*(c), c) < \mu_n(k, c))$. Then, if we can identify a rate function for $P(\mu_n(\pi^*(c), c) < \mu_n(k, c))$, we can extend this with a minimum over the contexts and alternatives to find the rate function of the contextual $PFS$. The rest of the derivation is focused on analyzing the log the moment generating function of $\mu_n(k, c)$, which requires a novel decomposition of the GP update formulas, and showing that it is asymptotically equivalent to that of the sample mean.
This connects us back to the derivation in \cite{Glynn2004LargeDev}, and we follow the steps therein to obtain the final result. The full proof is presented in the appendix.

It is worth remarking that the large deviations rate function presented in Theorem \ref{thm-pcs-rate-function} is identical to the rate function derived in \cite{Gao2019Covariates} using the sample mean estimator, which is the same as the rate function originally derived in \cite{Glynn2004LargeDev} with an additional minimum over the contexts.
Our analysis shows that the same rate function is still applicable when the independent Gaussian distribution is replaced with a Gaussian process. 
This is not that surprising since, as we see in the analysis,
the effects of correlations in the model disappear as more observations are added, and two estimators are asymptotically equivalent.
As a result, using a GP enables efficient inference by learning the similarities between contexts, resulting in significantly improved performance with small sampling budgets, while retaining similar asymptotical properties.

\section{SAMPLING POLICY} \label{section-sampling-policy}
In this section, we introduce the GP-C-OCBA policy, which aims to maximize the rate function presented in Theorem \ref{thm-pcs-rate-function}, adapt the IKG policy from the literature to our problem setting, and present a comparison of the computational cost of the two policies.

\subsection{GP-C-OCBA}
In classical R\&S literature, optimal computing budget allocation (OCBA, \cite{Chen2015RSoverview}) is a popular approach for maximizing the $PCS$ asymptotically. 
For the contextual R\&S problem,
\cite{Gao2019Covariates} derives the Karush-Kuhn-Tucker (KKT) conditions for maximizing the rate function (\ref{eq-final-rate-function}), and proposes an idealized sampling policy that iteratively realizes the KKT conditions.
The idealized policy derived by \cite{Gao2019Covariates} (see Section III.C of the paper for derivation) relies on $F(k, c)$, $\sigma^2(k, c)$, and $\pi^*(c)$, which are not known in practice, as well as $\hat{p}(k, c)$, which denotes the fraction of total samples allocated to $(k, c)$ so far and is different than $p(k, c)$ used in the derivation to denote the idealized asymptotic allocation rate.
For a practical algorithm, \cite{Gao2019Covariates} replaces $F(k, c)$ and $\sigma^2(k, c)$ with the sample mean and variance, respectively, and $\pi^*(c)$ with the corresponding estimate to define the C-OCBA policy.

As shown in Theorem \ref{thm-pcs-rate-function}, using the GP model yields the same rate function as using the sample mean predictors in \cite{Gao2019Covariates}, so we take a similar approach and use the posterior mean $\mu_n(k, c)$ and the posterior variance $\Sigma_n(k, c)$, which are our estimates based on $n$ observations collected so far, in place of $F(k, c)$ and $\sigma^2(k, c) / \hat{p}(k, c)$ in the idealized policy. The resulting GP-C-OCBA policy is presented in Algorithm \ref{alg:gp-c-ocba}. 

\begin{algorithm}[th]
  \caption{GP-C-OCBA for Contextual R\&S}
  \label{alg:gp-c-ocba}
\begin{algorithmic}[1]
  \STATE Use a pre-determined rule to allocate initial samples to each alternative. Let $N_0$ be the total number of initial observations used.
  \FOR{$n = N_0, \ldots, B - 1$}
    \STATE Update the GP model with the available observations, calculate $\mu_n(k, c), \Sigma_n(k, c), \pi^n(c)$.
    \STATE For all $c \in \mathcal{C}$ and $k \neq \pi^n(c)$, calculate
    \begin{equation*}
        \zeta(k, c) = \frac{(\mu_n(\pi^n(c), c) - \mu_n(k, c))^2}{\Sigma_n(\pi^n(c), c) + \Sigma_n(k, c)};
    \end{equation*}
    and set 
    \begin{equation*}
        \psi^{(1)}(c) = \frac{\hat{p}(\pi^n(c), c)}{\Sigma_n(\pi^n(c), c)}; \qquad \psi^{(2)}(c) = \sum_{k \neq \pi^n(c)} \frac{\hat{p}(k, c)}{\Sigma_n(k, c)}.
    \end{equation*}
    \STATE Solve for $(\Tilde{k}^*, \Tilde{c}^*) = {\arg \min}_{k \neq \pi^n(c), c \in \mathcal{C}} \zeta(k, c)$, and draw an additional sample from $(\pi^n(\Tilde{c}^*), \Tilde{c}^*)$, if $\psi^{(1)}(\Tilde{c}^*) < \psi^{(2)}(\Tilde{c}^*)$, and draw an additional sample from $(\Tilde{k}^*, \Tilde{c}^*)$ otherwise.
  \ENDFOR
  \STATE {\bfseries Return:} $\pi^B(c) = \arg \max_k \mu_{B}(k, c), c \in \mathcal{C}$ as the set of predicted best alternatives.
\end{algorithmic}
\end{algorithm}

Our experiments (in Section \ref{section-numerical-experiments}) show that using the GP model with the GP-C-OCBA sampling strategy leads to significantly higher contextual $PCS$ using the same sampling budget, thanks to the improvements in the posterior inference from using a statistical model that leverages the hidden correlation structure in the reward function.
An additional benefit of our approach over \cite{Gao2019Covariates} is in its applicability when the initial sampling budget is too small to draw multiple samples from each alternative. Using normal random variables to model each alternative-context pair requires a small number of samples from each pair for the initial estimate of the variance, which may limit the applicability of the algorithm when the sampling budget is limited. The GP prior, on the other hand, can be trained using very few samples for each alternative, rather than each alternative-context pair, thus the modified algorithm can be used even with a limited sampling budget.

A final remark about the GP-C-OCBA is that in the extreme case where there's only a single context, the sampling policy is equivalent to the OCBA-2 algorithm presented in \cite{li-gao2021convergence}, which has been shown to achieve the optimal convergence rate in the standard R\&S setting. Thus, GP-C-OCBA can be viewed as a principled extension of a rate optimal R\&S algorithm to the contextual R\&S setting.

\subsection{Integrated Knowledge Gradient} \label{section-ikg}
On a related note, another applicable method for the contextual R\&S problem is the integrated knowledge gradient (IKG) algorithm, which has been developed for the closely related problem of contextual Bayesian optimization. In this section, we adapt the IKG algorithm to our setting, and compare it with GP-C-OCBA. IKG offers a strong benchmark for our method, since it is based on the same GP model and has demonstrated superior sampling efficiency in prior work \cite{PEARCE2018IKG-REVI, pearce2020IKG-ConBO}.

Knowledge Gradient (KG) \cite{Frazier2009KG} is a value-of-information type policy that was originally proposed for the R\&S problem and later expanded to global optimization of black-box functions. It is well known for its superior sampling efficiency, which comes at a significant computational cost. For a given context $c'$, we can write the KG factor, which measures the expected improvement in value of the maximizer for context $c'$ from adding an additional sample at $(k, c)$, as 
\begin{equation*}
    \text{KG}(k, c; c') = \E_n[\max_{k' \in \K} \mu_{n+1}(k', c') \mid (k^{n+1}, c^{n+1}) = (k, c)] - \max_{k' \in \K} \mu_n(k', c').
\end{equation*}
In the classical R\&S setting, where $c$ and $c'$ are redundant (i.e., there is only a single context), the KG policy operates by evaluating the alternative $k^* = \arg \max_k \text{KG}(k, c; c)$. To extend this to the contextual Bayesian optimization problem, \cite{PEARCE2018IKG-REVI,zhang2020IKG,pearce2020IKG-ConBO} each study an integrated (or summed) version of KG, under slightly different problem settings, where either the context space or both alternative-context spaces are continuous. The main differences between these three works are in how they approximate and optimize the integrated KG factor in their respective problem settings.
For our problem setting, these approaches are equivalent, and we refer to the sampling policy as IKG. We use IKG as a benchmark to evaluate the sampling efficiency of our proposed algorithm. 

The IKG factor is simply a weighted sum of KG factors corresponding to each context. It measures the weighted sum of the improvement in value of maximizers, and is written as
\begin{equation*}
    \text{IKG}(k, c) = \sum_{c' \in \mathcal{C}} \text{KG}(k, c; c') w(c').
\end{equation*}
At each iteration, the IKG policy samples the alternative-context pair that maximizes the IKG factor, $(\tilde{k}^*, \tilde{c}^*) = \arg \max_{k \in \K, c \in \mathcal{C}} \text{IKG}(k, c)$. The IKG policy is summarized in Algorithm \ref{alg:IKG}.

\begin{algorithm}[th]
  \caption{IKG for Contextual R\&S}
  \label{alg:IKG}
\begin{algorithmic}[1]
  \STATE Use a pre-determined rule to allocate initial samples to each alternative. Let $N_0$ be the total number of initial observations used.
  \FOR{$n = N_0, \ldots, B - 1$}
    \STATE Update the GP model with the available observations, calculate $\mu_n(k, c)$ and $\Sigma_n(c, c'; k)$.
    \STATE For all $c, c' \in \mathcal{C}$ and $k \in \mathcal{K}$, use Algorithm 1 of \cite{PEARCE2018IKG-REVI} to calculate $\text{KG}(k, c; c')$ and set
    \begin{equation*}
        \text{IKG}(k, c) = \sum_{c' \in \mathcal{C}} \text{KG}(k, c; c') w(c').
    \end{equation*}
    \STATE Solve for $(\Tilde{k}^*, \Tilde{c}^*) = {\arg \max}_{k \in \mathcal{K}, c \in \mathcal{C}} \text{IKG}(k, c)$, and draw an additional sample from $(\Tilde{k}^*, \Tilde{c}^*)$.
  \ENDFOR
  \STATE {\bfseries Return:} $\pi^B(c) = \arg \max_k \mu_{B}(k, c), c \in \mathcal{C}$ as the set of predicted best alternatives.
\end{algorithmic}
\end{algorithm}

The main difficulty with using the IKG policy is its computational cost. In the finite alternative-context setting that we are working with, the $\text{KG}(k; c)$ can be computed exactly using Algorithm 1 from \cite{PEARCE2018IKG-REVI}, which has a cost of $\mathcal{O}(K \log K)$ for any pair $(k, c)$, given $\mu_n(\cdot, \cdot)$ and $\Sigma_n(\cdot, \cdot)$. This translates to an $\mathcal{O}(|\mathcal{C}| K \log K)$ cost for calculating the IKG factor for a given $(k, c)$.
In total, to find the next pair to sample using IKG costs $\mathcal{O}(|\mathcal{C}|^2 K^2 \log K)$ for calculating the IKG factors, and an additional $\mathcal{O}(\sum_k [n_k^3 + |\mathcal{C}|^2 n_k + |\mathcal{C}|n_k^2])$ to calculate posterior mean and covariance matrices, where $n_k$ denotes the total number of samples allocated to alternative $k$.

On the other hand, the cost of GP-C-OCBA is dominated by the cost of calculating the posterior mean and variance for each alternative-context pair, which has a total cost of $\mathcal{O}(\sum_k [n_k^3 + |\mathcal{C}| n_k^2])$. Note that we avoid the $\sum_k |\mathcal{C}|^2 n_k$ term since our algorithm only requires the posterior variance, as well as the $\mathcal{O}(|\mathcal{C}|^2 K^2 \log K)$ cost of IKG calculations.
This puts GP-C-OCBA at a significant advantage in terms of computational complexity.

\section{CONVERGENCE ANALYSIS}
In this section, we analyze the convergence properties of the GP-C-OCBA algorithm, and show that it identifies $\pi^*(c)$ almost surely as the simulation budget $B \rightarrow \infty$.
In addition, under the assumption of an independent prior distribution, we show that the allocation ratio produced by GP-C-OCBA converges almost surely to the optimal allocation ratio obtained by maximizing the rate function in Theorem \ref{thm-pcs-rate-function}. Finally, we build on this result to show that the contextual PFS converges to zero at the optimal exponential rate.

\subsection{Consistency of GP-C-OCBA}
We prove the result under a set of mild assumptions.
\begin{assumption} \label{assumption-consistency} \
\begin{enumerate}
    \item The best alternative, $\pi^*(c)$, is unique for all $c \in \mathcal{C}$.
    
    \item The observation noise is normally distributed with known variance, i.e., $\epsilon^n(k, c) \sim \mathcal{N}(0, \sigma^2(k, c))$ with known $\sigma^2(k, c)$, and $\epsilon^n(k, c)$ are independent across $n, k, c$.
    
    \item The GP prior is fixed across iterations.
    
    \item The prior correlation coefficient between any two contexts for any given alternative, \linebreak $Corr_0(c, c'; k) := \frac{\Sigma_0(c, c'; k)}{\sqrt{\Sigma_0(k, c) \Sigma_0(k, c')}}$, is strictly between $[-1, 1]$, i.e., $-1 < Corr_0(c, c'; k) < 1$. In other words, $\Sigma_0(\mathcal{C}, \mathcal{C}; k)$ is a strictly positive definite matrix.
\end{enumerate}
\end{assumption}

The first three points are common assumptions from the literature that simplify the analysis. The known observation noise level is needed for exact GP inference, 
and the assumption of a fixed prior eliminates the need to study the hyper-parameters in the analysis.
In practice, the form of the GP prior is kept fixed, however, the hyper-parameters of the GP are periodically updated to better fit the data, which is typically done via maximum likelihood estimation.
Though showing this rigorously for an adaptive sampling policy is a daunting task, the hyper-parameters of the GP prior typically converge to some final value and remain fixed after that. 
The last point is merely technical, since in practice it would not make sense to consider two contexts that are assumed to be perfectly correlated. The following lemma shows that this implies that $\Sigma_n(\mathcal{C}, \mathcal{C}; k)$ must remain strictly positive definite.
Note that since the posterior variance is a deterministic function of the sampling decisions (i.e., it is independent of the observations) the following statement holds for any sequence of sampling decisions.

\begin{lemma} \label{lemma-posterior-correlation}
    If $-1 < Corr_0(c, c'; k) < 1$, then $-1 < Corr_n(c, c'; k) < 1, \forall n \geq 0$.
\end{lemma}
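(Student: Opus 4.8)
The plan is to show that the posterior covariance matrix $\Sigma_n(\mathcal{C}, \mathcal{C}; k)$ remains strictly positive definite for all $n$; strict positive definiteness of the covariance is exactly equivalent to the off-diagonal correlations lying strictly in $(-1,1)$, so the two formulations are interchangeable. Since the GP prior is fixed (Assumption \ref{assumption-consistency}.3) and the observation noise variances are known and positive (Assumption \ref{assumption-consistency}.2), $A_n(k) = \Sigma_0(D_n(k), D_n(k); k) + \operatorname{diag}(\sigma^2(k, D_n(k)))$ is a sum of a positive semidefinite matrix and a strictly positive diagonal matrix, hence strictly positive definite and invertible; so the posterior update formula is well defined at every step. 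I would fix $k$ throughout and drop it from the notation.

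The cleanest route is to argue by induction on $n$ using the sequential (rank-one) form of the GP update rather than the batch form. At step $n \to n+1$ a single context $c^{n+1}$ is observed, and the posterior covariance updates as
\begin{equation*}
    \Sigma_{n+1}(c, c') = \Sigma_n(c, c') - \frac{\Sigma_n(c, c^{n+1})\, \Sigma_n(c^{n+1}, c')}{\Sigma_n(c^{n+1}, c^{n+1}) + \sigma^2(c^{n+1})}.
\end{equation*}
For any nonzero vector $v$ indexed by $\mathcal{C}$, writing $u = \Sigma_n(\mathcal{C}, \mathcal{C})\, v$ and $s = \Sigma_n(c^{n+1}, c^{n+1}) + \sigma^2(c^{n+1}) > 0$, one gets $v^\top \Sigma_{n+1}(\mathcal{C}, \mathcal{C})\, v = v^\top \Sigma_n(\mathcal{C}, \mathcal{C})\, v - u_{c^{n+1}}^2 / s$. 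By the inductive hypothesis $\Sigma_n(\mathcal{C}, \mathcal{C})$ is strictly positive definite, so $v^\top \Sigma_n(\mathcal{C}, \mathcal{C})\, v > 0$; the subtracted term is a Schur-complement correction, and the standard Schur-complement identity for the bordered matrix $\begin{pmatrix} \Sigma_n(\mathcal{C},\mathcal{C}) & \Sigma_n(\mathcal{C}, c^{n+1}) \\ \Sigma_n(c^{n+1}, \mathcal{C}) & s \end{pmatrix}$, which is itself strictly positive definite because $\Sigma_n(\mathcal{C},\mathcal{C})$ is and $s$ exceeds the relevant Schur complement (as $\sigma^2(c^{n+1})>0$), guarantees $v^\top \Sigma_{n+1}(\mathcal{C}, \mathcal{C})\, v > 0$. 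Hence $\Sigma_{n+1}(\mathcal{C}, \mathcal{C})$ is strictly positive definite, closing the induction; the base case $n=0$ is Assumption \ref{assumption-consistency}.4. A strictly positive definite matrix has all $2 \times 2$ principal minors positive, i.e. $\Sigma_n(k,c)\Sigma_n(k,c') - \Sigma_n(c,c';k)^2 > 0$, which is precisely $-1 < Corr_n(c,c';k) < 1$.

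The main obstacle is handling the $\sigma^2 > 0$ correction carefully so that the Schur-complement bookkeeping is airtight: one must make sure that adding positive noise to the pivot entry cannot destroy positive definiteness of the updated matrix, and that the update formula used matches the batch formula stated in the paper (this is the "novel decomposition of the GP update formulas" referenced earlier, so I would either cite that decomposition or give the one-line reconciliation via the matrix inversion lemma). A subtlety worth a sentence: the argument does not require the sampled context $c^{n+1}$ to be distinct from previously sampled contexts — repeated observations of the same context simply further reduce that coordinate's posterior variance while leaving strict positive definiteness intact, since $s$ stays bounded below by $\sigma^2(c^{n+1}) > 0$.
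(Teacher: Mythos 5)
Your proof is correct, but it follows a genuinely different route from the paper. You argue by induction that the full posterior covariance $\Sigma_n(\mathcal{C},\mathcal{C};k)$ stays strictly positive definite, using the sequential rank-one downdate $\Sigma_{n+1} = \Sigma_n - \frac{1}{s}\Sigma_n e e^\top \Sigma_n$ with $s = e^\top\Sigma_n e + \sigma^2(c^{n+1})$ and the Schur-complement fact that the bordered matrix is positive definite because its complement equals $\sigma^2(c^{n+1})>0$; strict positive definiteness of $\Sigma_{n+1}$ then gives the correlation bound via $2\times 2$ principal minors. The paper instead argues by contradiction: it first notes that a correlation of $\pm 1$ is absorbing (once degenerate, always degenerate), then adds hypothetical samples so every context has $n_e$ observations, invokes the conjugate multivariate-normal-mean formula $\Sigma_{n_e|\mathcal{C}|}(\mathcal{C},\mathcal{C};k) = (\Sigma_0^{-1} + n_e\,\mathrm{diag}(\sigma^2)^{-1})^{-1} \approx \frac{1}{n_e}\mathrm{diag}(\sigma^2)$, and concludes correlations tend to $0$, contradicting absorbed degeneracy. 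Your induction is more elementary and self-contained: it needs no absorbing-state claim, no asymptotic limit, and it establishes the stronger per-step invariant (strict positive definiteness of the whole posterior covariance) uniformly over arbitrary sampling sequences, which is exactly the "deterministic in the observations" property the paper notes before the lemma; the paper's argument, in exchange, reuses machinery (the precision-matrix form and the even-allocation limit) that it needs elsewhere and avoids per-step matrix algebra. Two small caveats on your write-up: your opening claim that strict positive definiteness is "exactly equivalent" to all pairwise correlations lying in $(-1,1)$ is only true in the direction you actually use (strict PD implies the correlation bounds; the converse can fail for a singular covariance with no pairwise correlation equal to $\pm 1$), so for the base case you should lean on the paper's own restatement of Assumption 1.4 as strict positive definiteness of $\Sigma_0(\mathcal{C},\mathcal{C};k)$ rather than on the correlation phrasing; and, as you anticipate, the one-observation-at-a-time update should be reconciled with the batch posterior formula by conjugacy or the matrix inversion lemma, which the paper's sequential decomposition in the appendix already provides.
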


The proofs of this and the following statements are included in the appendix. The posterior variance of a GP model at a given point decreases monotonically as we add more samples to the model. Thus, for a given alternative-context, the posterior variance in the limit is $0$ if that pair gets sampled infinitely often, and is a strictly positive value otherwise. The lemma below shows this rigorously.

\begin{lemma} \label{lemma-posterior-variance}
    Under Assumption \ref{assumption-consistency}, $\Sigma_n(k, c) \rightarrow 0$ almost surely if and only if $N_n(k, c) \rightarrow \infty$. 
\end{lemma}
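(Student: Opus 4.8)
The plan is to fix an alternative $k$ and a context $c$, abbreviate $N:=N_n(k,c)$, $v_0:=\Sigma_0(k,c)$, $s^2:=\sigma^2(k,c)$, and sandwich
\[
\left(\frac{1}{v_*}+\frac{N}{s^2}\right)^{-1}\ \le\ \Sigma_n(k,c)\ \le\ \left(\frac{1}{v_0}+\frac{N}{s^2}\right)^{-1},
\]
where $v_*:=\mathrm{Var}\!\left(F(k,c)\mid F(k,c'),\,c'\ne c\right)$. Since both bounds are strictly decreasing in $N$ and tend to $0$ exactly when $N\to\infty$, provided $v_*>0$ this gives the claimed equivalence. Here $\Sigma_n(k,c)$ is, by construction of the GP, the conditional variance of $F(k,c)$ given the observation vector $Y_n(k)$ under the joint Gaussian law of $\bigl(F(k,\mathcal{C}),\{\epsilon^m(k,\cdot)\}_m\bigr)$; as already noted it is a deterministic function of the sampling decisions, so it suffices to prove the equivalence pathwise, the only randomness being that of $N_n(k,c)$. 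Throughout I will use the elementary fact that for jointly Gaussian variables, conditioning on more coordinates can only (weakly) decrease the conditional variance --- immediate from the law of total variance and the determinism of Gaussian conditional variances.

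For the upper bound: the $N$ noisy observations recorded at the context $c$ itself form a sub-collection of $Y_n(k)$, so conditioning on all of $Y_n(k)$ yields no larger a variance than conditioning on those $N$ observations alone, and the latter is a one-dimensional conjugate-Gaussian update of a prior of variance $v_0$ by $N$ observations of noise variance $s^2$, equal to $(1/v_0+N/s^2)^{-1}$. Hence $N_n(k,c)\to\infty$ implies $\Sigma_n(k,c)\to0$.

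For the lower bound (and thus the converse): let $\mathcal{G}_n$ be the $\sigma$-algebra generated by the exact values $\{F(k,c'):c'\ne c\}$ together with the $N$ noisy observations at $c$ that appear in $Y_n(k)$. By Assumption~\ref{assumption-consistency}(2), every component of $Y_n(k)$ taken at a context $c'\ne c$ is $\mathcal{G}_n$-measurable up to independent noise, and every component taken at $c$ is itself $\mathcal{G}_n$-measurable, so $F(k,c)\perp Y_n(k)\mid\mathcal{G}_n$; therefore $\mathrm{Var}(F(k,c)\mid Y_n(k))\ge\mathrm{Var}(F(k,c)\mid Y_n(k),\mathcal{G}_n)=\mathrm{Var}(F(k,c)\mid\mathcal{G}_n)$. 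I evaluate the last quantity by conditioning first on $\{F(k,c'):c'\ne c\}$, which leaves $F(k,c)$ Gaussian with variance equal to the Schur complement $v_*$, and then incorporating the $N$ independent noisy observations of $F(k,c)$ via a conjugate-Gaussian update, giving $(1/v_*+N/s^2)^{-1}$. The strict positivity $v_*>0$ is exactly the strict positive-definiteness of $\Sigma_0(\mathcal{C},\mathcal{C};k)$ from Assumption~\ref{assumption-consistency}(4) (the same hypothesis underlying Lemma~\ref{lemma-posterior-correlation}). This yields the lower bound, so $\Sigma_n(k,c)\to0$ forces $N_n(k,c)\to\infty$.

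The main obstacle is this converse direction: a priori one might worry that an unboundedly growing set of observations at contexts neighboring $c$ could drive $\Sigma_n(k,c)$ to $0$ even if $c$ itself is sampled only finitely often, and the substance of the argument is that strict positive-definiteness of the prior covariance caps, via the strictly positive $v_*$, the information neighbors can ever carry about $F(k,c)$. The remaining work is bookkeeping --- checking that every ``conditioning shrinks variance'' step takes place inside the one jointly Gaussian family $\bigl(F(k,\mathcal{C}),\{\epsilon^m(k,\cdot)\}_m\bigr)$, where the law of total variance applies cleanly because conditional variances are deterministic.
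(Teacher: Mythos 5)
Your proof is correct, but it follows a genuinely different route from the paper's. The paper reuses the sequential, one-context-at-a-time decomposition of the posterior covariance developed in the proof of Theorem \ref{thm-pcs-rate-function}, invokes the existence of a limiting GP (Proposition 2.9 of the cited Bect et al.\ reference), and then runs an induction over the contexts, using the Cauchy--Schwarz bound $|\Sigma_n^{i-1}(c',c_i;k)| \le \sqrt{\Sigma_n^{i-1}(c',c';k)\Sigma_n^{i-1}(c_i,c_i;k)}$ together with the strict-correlation Lemma \ref{lemma-posterior-correlation} to keep the limiting variance positive when $N_\infty(k,c)<\infty$. You instead sandwich $\Sigma_n(k,c)$ between two scalar conjugate-Gaussian updates via information monotonicity: discarding all data except the $N_n(k,c)$ observations at $c$ gives the upper bound $(1/v_0+N/s^2)^{-1}$, while granting oracle knowledge of $\{F(k,c'):c'\ne c\}$ and using the conditional independence of the other-context observations given that oracle gives the lower bound $(1/v_*+N/s^2)^{-1}$, with $v_*>0$ the Schur complement supplied directly by the strict positive definiteness in Assumption \ref{assumption-consistency}(4). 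Your argument is more elementary and self-contained --- no limiting-GP result, no induction, no appeal to Lemma \ref{lemma-posterior-correlation} --- and it yields explicit nonasymptotic two-sided bounds that depend on the design only through $N_n(k,c)$, which is also why the pathwise reading for an adaptive policy (which you correctly flag) is harmless; what the paper's approach buys is consistency with machinery it has already built and reuses elsewhere (the recursive covariance decomposition also drives Theorem \ref{thm-pcs-rate-function} and Lemma \ref{lemma-posterior-mean-consistency}).
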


The intuitive interpretation of Lemma \ref{lemma-posterior-variance} is that, despite modeling correlations between contexts, we cannot drive the uncertainty about the reward function of an alternative-context to zero, unless we collect infinitely many observations for that pair. This rules out the pathological cases where the statistical model treats a reward estimate as certain purely based on observations of other alternative-context pairs. 

We will build on these results to show that GP-C-OCBA policy samples each alternative-context pair infinitely often. We will first establish that if a given context gets sampled infinitely often, all alternative-context pairs corresponding to that context must get sampled infinitely often. 

\begin{lemma} \label{lemma-single-context-infinite-samples}
    Under Assumption \ref{assumption-consistency}, using the GP-C-OCBA policy, for any $c \in \mathcal{C}$,
    \begin{equation*}
        \lim_{n \rightarrow \infty} \sum_{k \in \K} N_n(k, c) = \infty \implies \lim_{n \rightarrow \infty} N_n(k, c) = \infty, \forall k \in \K, \text{ almost surely.}
    \end{equation*}
\end{lemma}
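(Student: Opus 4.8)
The plan is to argue by contradiction. Fix a context $c$ with $\sum_{k} N_n(k, c) \to \infty$, and suppose that on a set of positive probability there is an alternative $k_0 \in \K$ with $N_n(k_0, c) \not\to \infty$, i.e., $N_n(k_0, c)$ stays bounded by some finite $M$ along the whole sequence. Since the total number of samples at context $c$ diverges while $k_0$ receives only finitely many, there must exist at least one other alternative $k_1 \neq k_0$ at context $c$ that is sampled infinitely often. By Lemma \ref{lemma-posterior-variance}, $\Sigma_n(k_1, c) \to 0$ almost surely, whereas $\Sigma_n(k_0, c)$ stays bounded below by a strictly positive constant (again Lemma \ref{lemma-posterior-variance}, since $N_n(k_0, c) \le M$). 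The key point is then to inspect the GP-C-OCBA selection rule restricted to context $c$ and show that this configuration forces $k_0$ to be sampled eventually, contradicting boundedness.

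The main steps, in order, are as follows. First I would record that whenever GP-C-OCBA selects context $c$ to sample at (i.e., $\tilde c^* = c$ in Step 5), it draws either from $(\pi^n(c), c)$ or from the current minimizer $\tilde k^*(c) = \arg\min_{k \neq \pi^n(c)} \zeta(k, c)$. Second, I would consider the two cases for $k_0$: either $k_0 = \pi^n(c)$ infinitely often, or $k_0 \neq \pi^n(c)$ for all large $n$. In the first case, note that when $k_0$ is the current estimated best at $c$, the tie-breaking in Step 5 compares $\psi^{(1)}(c) = \hat p(\pi^n(c), c)/\Sigma_n(\pi^n(c), c)$ against $\psi^{(2)}(c)$; since $\Sigma_n(k_0, c)$ is bounded below and $\hat p(k_0, c) = N_n(k_0,c)/n \to 0$ (as $N_n(k_0,c) \le M$ while $n \to \infty$), we get $\psi^{(1)}(c) \to 0$, while $\psi^{(2)}(c)$ has at least one term $\hat p(k_1, c)/\Sigma_n(k_1, c)$ with $\Sigma_n(k_1,c) \to 0$; if $\hat p(k_1, c)$ does not also vanish this blows up, and in any case one shows $\psi^{(1)}(c) < \psi^{(2)}(c)$ eventually, so the sample goes to $(\pi^n(c), c) = (k_0, c)$ — contradiction. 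In the second case ($k_0 \neq \pi^n(c)$ eventually), I would show $\zeta(k_0, c) \to 0$: its numerator $(\mu_n(\pi^n(c), c) - \mu_n(k_0, c))^2$ is bounded (the posterior means at $c$ stay bounded, using convergence of the posterior mean at infinitely-sampled pairs and boundedness of the GP update at the finitely-sampled $k_0$), while its denominator $\Sigma_n(\pi^n(c), c) + \Sigma_n(k_0, c) \ge \Sigma_n(k_0,c)$ is bounded below — wait, that gives $\zeta(k_0,c)$ bounded, not vanishing, so instead the correct observation is the reverse: I must show that $\zeta(k_1, c) \to 0$ for the infinitely-sampled competitor (numerator bounded, denominator $\to 0$), which would make $(k_1, c)$ attractive, and then argue via the total-budget accounting that context $c$ must be selected infinitely often with $\tilde k^* = k_0$ at some point; more cleanly, one shows that the global minimum $\min_{k \neq \pi^n(c'), c'} \zeta(k, c')$ is achieved, infinitely often, at context $c$, and within context $c$ the alternative $k_0$ becomes the argmin because its $\zeta$ value, while possibly bounded away from $0$, is still smaller than that of any alternative (including $k_1$) whose $\zeta$ has collapsed — so I would rather compare the finite value $\zeta(k_0,c)$ against $\zeta(k_1,c)\to 0$ and conclude $k_0$ is eventually \emph{not} the argmin, forcing the sample to $k_1$; then the contradiction must come from a separate counting argument showing $\Sigma_n(k_0, c)$ being bounded away from zero while $c$ is sampled infinitely often is itself impossible under the rule. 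I would make this precise by showing that the rule cannot indefinitely ignore the alternative with the largest posterior variance at a context that is itself sampled infinitely often.

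The hard part, and the step I expect to absorb most of the work, is exactly this last point: ruling out the scenario where context $c$ is sampled infinitely often but the samples are forever funneled to alternatives other than $k_0$. The delicate issue is that $\zeta(k_0, c)$ need not converge to $0$ — its denominator contains $\Sigma_n(k_0,c)$ which stays positive — so $k_0$ can remain a non-minimizer of $\zeta$ indefinitely; the argument must instead exploit the OCBA-style balance encoded jointly in $\zeta$, $\psi^{(1)}$, and $\psi^{(2)}$, namely that once the competitors of $k_0$ at $c$ have their $\Sigma_n \to 0$, the quantities $\psi^{(2)}(c')$ and the $\zeta$ values at the over-sampled pairs behave in a way that, by the pigeonhole principle over the finitely many contexts and alternatives, eventually redirects a sample to $(k_0, c)$. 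Concretely I anticipate needing an auxiliary claim: if $(k,c)$ is sampled infinitely often and $(k',c)$ only finitely often with $k' \neq \pi^n(c)$ eventually, then $\zeta(k', c) \to 0$ as well — because the gap $\mu_n(\pi^n(c),c) - \mu_n(k',c)$ must shrink; this would hold if $\pi^n(c)$ stabilizes and $\mu_n$ at the best converges to $F(\pi^*(c),c)$ while $\mu_n(k',c)$, though not converging, cannot exceed it in the limit — but here boundedness of $\Sigma_n(k',c)$ away from $0$ means $\mu_n(k',c)$ fluctuates, so this claim is false in general, and the resolution is that the fluctuations of $\mu_n(k_0,c)$ (driven only by the $\le M$ samples at $(k_0,c)$ and by posterior updates from other contexts, all of bounded magnitude by Lemma \ref{lemma-posterior-correlation} keeping correlations away from $\pm 1$) keep $\zeta(k_0,c)$ itself bounded, and then a counting argument — the number of times context $c$ is chosen equals the divergent quantity $\sum_k N_n(k,c)$, distributed among finitely many alternatives, so some alternative other than $k_0$ is chosen infinitely often and drives its own $\zeta \to 0$, eventually making that alternative's $\zeta$ the global minimum's competitor and, through the $\psi^{(1)}$ vs.\ $\psi^{(2)}$ comparison with the vanishing denominators, forcing a sample back to the under-sampled $k_0$. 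I would organize the final write-up around this counting-plus-variance-lower-bound contradiction, citing Lemmas \ref{lemma-posterior-correlation} and \ref{lemma-posterior-variance} for the two structural facts (correlations stay bounded away from $\pm 1$; variance vanishes iff sampled infinitely often) that make it go through.
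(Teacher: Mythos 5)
Your case analysis starts along the right lines — the part where $k_0$ is the currently predicted best (your Case 1, showing $\psi^{(1)}(c)\to 0$ while $\psi^{(2)}(c)$ blows up, forcing a sample to $(k_0,c)$) is essentially the paper's first step. But the heart of the lemma, your Case 2, contains a genuine gap, and in fact the sign of the key comparison is backwards. For an infinitely-sampled competitor $k_1\neq\pi^n(c)$, the numerator of $\zeta(k_1,c)$ does not vanish: by almost-sure convergence of the GP posterior (the limiting-GP result the paper invokes, Proposition 2.9 of \cite{bect2019GPSupermartingale}), the posterior-mean gaps at context $c$ are eventually squeezed between constants $d_{\min}>0$ and $d_{\max}<\infty$, while the denominator $\Sigma_n(\pi^n(c),c)+\Sigma_n(k_1,c)\to 0$ by Lemma \ref{lemma-posterior-variance}. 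Hence $\zeta(k_1,c)\to\infty$, not $0$. Meanwhile $\zeta(k_0,c)\le d_{\max}^2/\Sigma_n(k_0,c)$ stays bounded because $\Sigma_n(k_0,c)$ is bounded away from zero. So eventually the minimizer of $\zeta(\cdot,c)$ lies in the \emph{finitely}-sampled set — precisely the opposite of your conclusion that ``$k_0$ is eventually not the argmin'' — and this is what yields the contradiction. The ``separate counting argument'' you defer to (that the rule cannot indefinitely ignore the high-variance alternative) is exactly the statement being proved, so as written your fallback is circular; the missing ingredient is the boundedness (above and below) of the mean gaps, which is what converts the variance dichotomy of Lemma \ref{lemma-posterior-variance} into a dichotomy for $\zeta$.

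A second, smaller omission: even with the correct $\zeta$ dichotomy, you must rule out the scenario in which, at the iterations where context $c$ is selected, the tie-break always gives $\psi^{(1)}(c)\le\psi^{(2)}(c)$, so that all samples go to the predicted best and the $\zeta$-argmin is never drawn. The paper handles this with a separate step: if only $(\pi^{\infty}(c),c)$ were sampled from some point on, then $\hat p_n(\pi^{\infty}(c),c)\to 1$ and $\Sigma_n(\pi^{\infty}(c),c)\to 0$ give $\psi^{(1)}(c)\to\infty$, while $\psi^{(2)}(c)\to 0$ since every other alternative has vanishing allocation fraction and variance bounded below, contradicting the condition under which the best is sampled. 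This guarantees infinitely many iterations at which the $\zeta$-minimizer at $c$ is actually drawn, at which point the corrected Case 2 argument forces a sample to a finitely-sampled alternative and completes the contradiction. Your proposal never supplies this step, and together with the reversed $\zeta$ comparison it leaves the main claim unproven.
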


If we only had a single context, Lemma \ref{lemma-single-context-infinite-samples} would be sufficient to prove the consistency of the algorithm. However, with multiple contexts, we need to ensure that a strict subset of contexts cannot consume all observations after a certain iteration. The following theorem extends Lemma \ref{lemma-single-context-infinite-samples} to show that, as the budget goes to infinity, all alternative-context pairs are sampled infinitely often by GP-C-OCBA.

\begin{theorem} \label{theorem-infinitely-sample}
    Under Assumption \ref{assumption-consistency}, the GP-C-OCBA policy samples each alternative infinitely often, i.e.,
    \begin{equation}
        \lim_{n \rightarrow \infty} N_n(k, c) = \infty, \forall k \in \K, c \in \mathcal{C}, \text{ almost surely}.
    \end{equation}
\end{theorem}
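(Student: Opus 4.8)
The plan is to argue by contradiction, assuming that the set of alternative-context pairs that are sampled only finitely often is nonempty, and derive a contradiction with the structure of the GP-C-OCBA selection rule. By Lemma~\ref{lemma-single-context-infinite-samples}, it suffices to show that every context is sampled infinitely often: if some context $c$ is sampled infinitely often, then all pairs $(k,c)$ are, and if instead we can show \emph{every} context is sampled infinitely often, we are done. So let $\mathcal{C}_\infty \subseteq \mathcal{C}$ be the (random) set of contexts sampled infinitely often and $\mathcal{C}_{\text{fin}} = \mathcal{C} \setminus \mathcal{C}_\infty$; since the total budget is infinite, $\mathcal{C}_\infty \neq \emptyset$. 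Suppose for contradiction that $\mathcal{C}_{\text{fin}} \neq \emptyset$. Then there is a finite (random) time $n_0$ after which no context in $\mathcal{C}_{\text{fin}}$ is ever sampled again, so for $n \geq n_0$ all sampling happens within $\mathcal{C}_\infty$.

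Next I would examine the two quantities driving the selection rule. For a context $c \in \mathcal{C}_{\text{fin}}$, the counts $N_n(k,c)$ are eventually constant, so by Lemma~\ref{lemma-posterior-variance} the posterior variances $\Sigma_n(\pi^n(c),c)$ and $\Sigma_n(k,c)$ converge to strictly positive limits, and $\mu_n(k,c)$ converges (the GP posterior mean stabilizes once no new data on that alternative is added; more carefully, one uses that $\mathcal{F}_n(k)$ changes only through data at other contexts, but the relevant rows eventually stabilize — this needs the fixed-prior Assumption~\ref{assumption-consistency}(3)). Hence $\zeta(k,c)$ converges to a finite, strictly positive limit for every $c \in \mathcal{C}_{\text{fin}}$ (strict positivity uses uniqueness of $\pi^*(c)$ from Assumption~\ref{assumption-consistency}(1), together with consistency of $\mu_n$ so that $\pi^n(c) = \pi^*(c)$ eventually). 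On the other hand, for contexts $c \in \mathcal{C}_\infty$: infinitely many samples are split among finitely many pairs $(k,c)$, so at least one pair is sampled infinitely often; combining with Lemma~\ref{lemma-single-context-infinite-samples} in fact \emph{all} pairs $(k,c)$, $c \in \mathcal{C}_\infty$, are sampled infinitely often, so $\Sigma_n(k,c) \to 0$ and hence $\zeta(k,c) \to \infty$ for all $k \neq \pi^n(c)$ and $c \in \mathcal{C}_\infty$ (the numerator $(\mu_n(\pi^n(c),c) - \mu_n(k,c))^2$ converges to the strictly positive gap $(F(\pi^*(c),c) - F(k,c))^2$).

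The contradiction then comes from the arg-min in Step~5 of the algorithm: for $n$ large, $\min_{k \neq \pi^n(c),\, c \in \mathcal{C}_\infty} \zeta(k,c) \to \infty$, while for any fixed $c^\dagger \in \mathcal{C}_{\text{fin}}$ and $k^\dagger \neq \pi^*(c^\dagger)$ the value $\zeta(k^\dagger, c^\dagger)$ stays bounded. Therefore, for all sufficiently large $n$, the global arg-min $(\tilde k^*, \tilde c^*)$ must have $\tilde c^* \in \mathcal{C}_{\text{fin}}$, so GP-C-OCBA draws its next sample at a context in $\mathcal{C}_{\text{fin}}$ — contradicting the definition of $n_0$. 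Hence $\mathcal{C}_{\text{fin}} = \emptyset$, i.e., every context is sampled infinitely often, and Lemma~\ref{lemma-single-context-infinite-samples} finishes the proof.

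I expect the main obstacle to be making the convergence claims for $c \in \mathcal{C}_{\text{fin}}$ fully rigorous. In particular one must argue that $\mu_n(k,c)$ converges for such $c$ even though the GP for alternative $k$ keeps receiving data at \emph{other} contexts $c' \in \mathcal{C}_\infty$ — a priori this could keep perturbing $\mu_n(k,c)$. The resolution is that each alternative $k$'s data lives only at contexts that are sampled infinitely often among those relevant, and one needs a continuity/stability argument (using Lemma~\ref{lemma-posterior-correlation} to keep $\Sigma_n(\mathcal{C},\mathcal{C};k)$ invertible and well-conditioned, and a martingale-type or strong-law argument for the observation averages) to conclude $\mu_n(k,c)$ converges almost surely to a finite limit, with $\mu_n(k,c) \to F(k,c)$ when $(k,c)$ itself is sampled infinitely often. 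A careful statement would isolate this as the crux; the rest is the comparison of divergent versus bounded $\zeta$ values, which is routine once the limits are established.
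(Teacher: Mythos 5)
Your proposal is correct and takes essentially the same route as the paper's proof: partition the contexts into finitely and infinitely sampled sets, use Lemma~\ref{lemma-single-context-infinite-samples} and Lemma~\ref{lemma-posterior-variance} to show that $\zeta(k,c)$ diverges on the infinitely sampled contexts while remaining bounded on the finitely sampled ones, and contradict the definition of $n_0$ via the arg-min in Step~5. The stabilization of $\mu_n(k,c)$ at finitely sampled contexts that you flag as the main obstacle is resolved exactly as in the paper's proof of Lemma~\ref{lemma-single-context-infinite-samples}, by invoking the almost-sure convergence of the GP posterior to a limiting GP (Proposition~2.9 of \cite{bect2019GPSupermartingale}), and note that only boundedness of the numerator is needed there, so your parenthetical appeal to consistency of $\pi^n(c)$ on finitely sampled contexts is unnecessary (and would not hold in general).
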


Allocating infinitely many samples to each alternative-context pair would have been sufficient to prove consistency if we did not model any correlation between contexts. However, we are not aware of any general consistency results regarding the GP posterior mean, so we need to ensure that the predictions converge to the true reward function. The following lemma establishes this result.

\begin{lemma}\label{lemma-posterior-mean-consistency}
    Under Assumption \ref{assumption-consistency}, if $\lim_{n \rightarrow \infty} N_n(k, c) = \infty$, then $\mu_{n}(k, c) \rightarrow F(k, c)$ almost surely.
\end{lemma}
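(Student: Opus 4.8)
The plan is to fix an alternative-context pair $(k,c)$ with $N_n(k,c)\to\infty$ and exhibit $\mu_n(k,c)$ as the sum of the true value $F(k,c)$, a "prior-contribution" term that vanishes, and a "noise-contribution" term that vanishes almost surely. The natural device is the novel decomposition of the GP update formulas referred to in Section~\ref{section-derivation}: write $Y_n(k)=F(k,D_n(k))+\epsilon_n(k)$ and split
\[
\mu_n(k,c)=\underbrace{\mu_0(k,c)+\Sigma_0(c,D_n(k);k)A_n^{-1}(k)\big(F(k,D_n(k))-\mu_0(D_n(k))\big)^\top}_{=: \bar\mu_n(k,c)}+\underbrace{\Sigma_0(c,D_n(k);k)A_n^{-1}(k)\,\epsilon_n(k)^\top}_{=: e_n(k,c)}.
\]
Here $\bar\mu_n(k,c)$ is exactly the GP posterior mean one would obtain from noise-free observations of the true reward, so it is the conditional expectation $\E[\mu_n(k,c)\mid \mathcal F_n \text{ without noise}]$, and $e_n(k,c)$ is a mean-zero Gaussian with variance $\Sigma_0(k,c)-\Sigma_n(k,c)$ — a standard identity that also shows $\mathrm{Var}(e_n(k,c))\le \Sigma_0(k,c)$ uniformly. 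By Lemma~\ref{lemma-posterior-variance}, $N_n(k,c)\to\infty$ gives $\Sigma_n(k,c)\to 0$, hence $\mathrm{Var}(e_n(k,c))\to\Sigma_0(k,c)$; the point is not that $e_n$ has small variance, but that it must be controlled pathwise.

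For the noise term I would use a martingale argument. Order the observations globally by sampling time $m=1,\dots,n$; then $e_n(k,c)$ is a linear functional $\sum_{m} a^n_m \epsilon_m$ of independent mean-zero Gaussians, and one checks that $\sum_m (a^n_m)^2 = \mathrm{Var}(e_n(k,c))/\text{(noise variance of that sample)}$ is bounded. The cleanest route is to note that the sequence $\big(e_n(k,c)\big)_n$, after the standard GP recursive update, can be written as a sum of a bounded-variance martingale difference sequence, so by the martingale strong law (or a Borel--Cantelli argument along the subsequence of times at which $(k,c)$ is sampled, combined with the monotone shrinkage of $\Sigma_n$ between such times) $e_n(k,c)\to 0$ almost surely. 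Alternatively, and perhaps more robustly, I would invoke the representation $\mu_n(k,c)=\E[F(k,c)\mid \tilde{\mathcal F}_n]$ where $\tilde{\mathcal F}_n$ is the filtration generated by all observations; this is a uniformly integrable (indeed $L^2$-bounded) martingale in $n$, so it converges almost surely to $\E[F(k,c)\mid\tilde{\mathcal F}_\infty]$, and it remains to identify the limit as $F(k,c)$.

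Identifying the limit is where the correlation structure must be confronted, and I expect this to be the main obstacle. Convergence of the martingale is automatic; showing the limit equals $F(k,c)$ and not some shrunk-toward-prior value requires that the information in $\tilde{\mathcal F}_\infty$ pins down $F(k,c)$ exactly. Since $(k,c)$ is sampled infinitely often and the observation noises at that point are i.i.d.\ $\mathcal N(0,\sigma^2(k,c))$, the empirical average of just those direct observations already converges to $F(k,c)$ a.s.\ by the classical SLLN; this direct-observation average is $\tilde{\mathcal F}_\infty$-measurable, so $\E[F(k,c)\mid\tilde{\mathcal F}_\infty]=F(k,c)$, giving $\mu_n(k,c)\to F(k,c)$. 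The one subtlety is that this whole argument is being run inside the fixed-prior regime (Assumption~\ref{assumption-consistency}, parts 2--4), and I would use Lemma~\ref{lemma-posterior-correlation} only to guarantee all the matrices $A_n(k)$ stay invertible so the decomposition is well defined throughout; the strict positive-definiteness prevents the degenerate situation where the model would (incorrectly) treat $F(k,c)$ as already determined. I would present the martingale-plus-SLLN argument as the main line, and relegate the explicit $A_n^{-1}$ decomposition and the bound $\mathrm{Var}(e_n)\le\Sigma_0(k,c)$ to a supporting computation.
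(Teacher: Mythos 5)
There is a genuine gap in your main line of argument. The route you ultimately choose rests on the identity $\mu_n(k,c)=\E[F(k,c)\mid\tilde{\mathcal F}_n]$ and Doob's martingale convergence theorem, but that identity only holds under the Bayesian joint model in which $F$ is itself a draw from the GP prior. In the lemma (and throughout the paper's analysis) $F(k,c)$ is a fixed, deterministic reward and the only randomness is the Gaussian observation noise and the sampling decisions; under this true data-generating process $\mu_n(k,c)$ is just an estimator and is not a conditional expectation or a martingale. Your argument therefore proves, at best, consistency for prior-almost-every $F$, which is a different and weaker statement than the lemma, whose conclusion must hold for the arbitrary fixed $F$ appearing in Assumption \ref{assumption-consistency}. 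The paper avoids this entirely by a direct frequentist computation: it reorders the sequential conditioning in the decomposition (\ref{eq-mean-breakdown}) from the proof of Theorem \ref{thm-pcs-rate-function} so that $c_1=c$, observes that $N_n(k,c)\to\infty$ makes the shrinkage factor $\frac{N_n(k,c)\Sigma_0(k,c)}{\sigma^2(k,c)+N_n(k,c)\Sigma_0(k,c)}$ tend to $1$ while the residual covariances $\Sigma_n^{i}(\cdot,c;k)$, $i\geq 1$, tend to $0$ so all remaining terms vanish, concluding $\mu_n(k,c)\to\widebar{Y}_\infty(k,c)$, and then invokes the classical SLLN.

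Your first (decomposition) route is much closer in spirit to the paper's proof, but its execution also has errors. The claimed identity $\mathrm{Var}(e_n(k,c))=\Sigma_0(k,c)-\Sigma_n(k,c)$ is the Bayesian identity for the variance of the posterior mean over both the prior and the noise; with $F$ fixed, the noise contribution has variance $\Sigma_0(c,D_n(k);k)A_n^{-1}(k)\,diag(\sigma^2)\,A_n^{-1}(k)\Sigma_0(D_n(k),c;k)$, which actually tends to $0$ (in the independent-prior case it is approximately $\sigma^2(k,c)/N_n(k,c)$), not to $\Sigma_0(k,c)$ as you assert via Lemma \ref{lemma-posterior-variance}; indeed, a sequence of mean-zero Gaussians with variances bounded away from zero cannot converge to $0$ almost surely, so your stated variance limit and your conclusion $e_n(k,c)\to 0$ a.s.\ contradict each other. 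Moreover, you never establish that the noise-free part $\bar\mu_n(k,c)$ converges to $F(k,c)$; this is precisely where the correlation structure and prior shrinkage must be handled, and it requires the same computation the paper performs (the first-term weight tending to one and the cross-context weights tending to zero after conditioning on infinitely many samples at $c$). If you replace the martingale step by that explicit computation and then apply the SLLN to the direct observations at $(k,c)$, you recover the paper's argument.
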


Coupled with Theorem \ref{theorem-infinitely-sample}, Lemma \ref{lemma-posterior-mean-consistency} shows that using GP-C-OCBA to allocate the observation budget, the posterior mean converges almost surely to the true reward for all alternative-context pairs. Thus, it follows that the predicted best alternative must  almost surely converge to the correct best alternative for all contexts. The following corollary summarizes the result.

\begin{corollary}
    Under conditions of Theorem \ref{theorem-infinitely-sample}, GP-C-OCBA policy is strongly consistent, i.e., $\pi^B(c) \rightarrow pi^*(c), \forall \ c$ almost surely as the observation budget $B \rightarrow \infty$.
\end{corollary}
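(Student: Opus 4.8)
The plan is to derive the corollary as an immediate consequence of Theorem~\ref{theorem-infinitely-sample} and Lemma~\ref{lemma-posterior-mean-consistency}, exploiting the finiteness of $\K \times \mathcal{C}$ and the uniqueness of $\pi^*(c)$ from Assumption~\ref{assumption-consistency}(1). First I would invoke Theorem~\ref{theorem-infinitely-sample} to fix an almost-sure event on which $N_n(k,c) \to \infty$ for every pair $(k,c) \in \K \times \mathcal{C}$. On that event, Lemma~\ref{lemma-posterior-mean-consistency} applies separately to each pair, so $\mu_n(k,c) \to F(k,c)$; intersecting these $|\K| \cdot |\mathcal{C}|$ almost-sure events — a \emph{finite} intersection, hence still of probability one — yields a single event $\Omega^\star$ with $P(\Omega^\star) = 1$ on which $\mu_n(k,c) \to F(k,c)$ simultaneously for all alternative-context pairs.

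Next I would argue on a fixed realization in $\Omega^\star$. For each context $c$, Assumption~\ref{assumption-consistency}(1) guarantees that the gap $\delta_c := F(\pi^*(c), c) - \max_{k \neq \pi^*(c)} F(k,c)$ is strictly positive. Because $\K$ is finite, I can choose $N(c)$ large enough that $|\mu_n(k,c) - F(k,c)| < \delta_c/2$ for all $k \in \K$ whenever $n \geq N(c)$. Then for $n \geq N(c)$ and any $k \neq \pi^*(c)$,
\[
\mu_n(\pi^*(c), c) > F(\pi^*(c),c) - \delta_c/2 \geq F(k,c) + \delta_c/2 > \mu_n(k,c),
\]
so $\pi^n(c) = {\arg \max}_{k} \mu_n(k,c) = \pi^*(c)$. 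Setting $N := \max_{c \in \mathcal{C}} N(c)$, which is finite since $\mathcal{C}$ is finite, gives $\pi^n(c) = \pi^*(c)$ for every $c \in \mathcal{C}$ and every $n \geq N$ on $\Omega^\star$. In particular $\pi^B(c) = \pi^*(c)$ for all $B \geq N$, so $\pi^B(c) \to \pi^*(c)$ — in fact the predicted best alternative stabilizes after finitely many steps — almost surely, which is the claimed strong consistency.

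I do not expect a genuine obstacle here: all the substantive work has already been done in Theorem~\ref{theorem-infinitely-sample} (every pair is sampled infinitely often) and Lemma~\ref{lemma-posterior-mean-consistency} (consistency of the GP posterior mean under infinite sampling). The only point requiring mild care is that the exceptional null set must not depend on the context, which is handled by finiteness of $\mathcal{C}$ (so that a finite intersection of probability-one events is again probability one) and by taking the maximum of the context-dependent thresholds $N(c)$; and one should state explicitly that convergence over the discrete, finite range $\K$ means eventual equality of $\pi^B(c)$ and $\pi^*(c)$.
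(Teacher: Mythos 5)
Your proposal is correct and follows essentially the same route as the paper: the paper also obtains the corollary by combining Theorem~\ref{theorem-infinitely-sample} (every pair is sampled infinitely often) with Lemma~\ref{lemma-posterior-mean-consistency} (posterior-mean consistency) and the uniqueness of $\pi^*(c)$, and your write-up merely makes explicit the finite-intersection and gap-threshold details that the paper leaves implicit.
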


\subsection{Convergence to the Optimal Allocation Ratio and the Convergence Rate}
The results of the previous subsection show that, given enough samples, GP-C-OCBA is guaranteed to identify the optimal policy, $\pi^*(c)$. However, these are asymptotic results, and they do not provide any insight into how fast this convergence occurs. Under certain conditions on the prior covariance matrix, we can show that the allocation ratio produced by GP-C-OCBA converges to the optimal static allocation ratio, which is obtained by maximizing the rate function given in Theorem \ref{thm-pcs-rate-function}.

Let
\begin{equation*}
    \eta(k, c) = \frac{(F(\pi^*(c), c) - F(k, c))^2}{\sigma^2(\pi^*(c), c) / p^*(\pi^*(c), c) + \sigma^2(k, c) / p^*(k, c)}
\end{equation*}
and
\begin{equation*}
    \hat{\eta}_n(k, c) = \frac{(F(\pi^*(c), c) - F(k, c))^2}{\sigma^2(\pi^*(c), c) / \hat{p}_n(\pi^*(c), c) + \sigma^2(k, c) / \hat{p}_n(k, c)}.
\end{equation*}
The work of \cite{Gao2019Covariates} shows that the optimal static allocation ratio, i.e., the allocation ratio maximizing the large deviations rate function, satisfies the following conditions.
\begin{lemma}\label{lemma-optimal-allocation} (Lemma 1 of \cite{Gao2019Covariates})
    The optimal static allocation ratio, $p^*(k, c)$, satisfies the following.
    \begin{align*}
        \frac{p^*(\pi^*(c), c)^2}{\sigma^2(\pi^*(c), c)} &= \sum_{k \neq \pi^*(c)} \frac{p^*(k, c)^2}{\sigma^2(k, c)}, \forall c \in \mathcal{C} \\
        \eta(k, c) &= \eta(k', c), \forall c \in \mathcal{C}, k \neq k' \neq \pi^*(c) \\
        \eta(k, c) &= \eta(k', c'), \forall c \neq c', k \neq \pi^*(c), k' \neq \pi^*(c').
    \end{align*}
\end{lemma}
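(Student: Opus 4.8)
By definition $p^*$ maximizes the scaled rate function over the probability simplex, so the plan is to write that problem as a convex program, invoke the KKT conditions, and read off the three identities. Concretely, $p^*$ solves
\begin{equation*}
  \max_{p\ge 0,\ \sum_{k,c}p(k,c)=1}\ \min_{c\in\mathcal{C}}\ \min_{k\neq\pi^*(c)}\ \frac{(F(\pi^*(c),c)-F(k,c))^2}{2\bigl(\sigma^2(\pi^*(c),c)/p(\pi^*(c),c)+\sigma^2(k,c)/p(k,c)\bigr)} .
\end{equation*}
First I would note that a maximizer exists and lies in the interior of the simplex: the objective is continuous, it vanishes whenever some $p(k,c)=0$ (that coordinate kills the $(k,c)$ term, or every term of context $c$ if $k=\pi^*(c)$), and the uniform allocation attains a strictly positive value, so $p^*(k,c)>0$ for all $(k,c)$. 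Then, substituting $u=1/z$ in the epigraph reformulation, I would rewrite the problem as the convex program: minimize $u$ over $p\ge 0$, $u>0$ subject to $\sum_{k,c}p(k,c)=1$ and, for every $c$ and $k\neq\pi^*(c)$, $\sigma^2(\pi^*(c),c)/p(\pi^*(c),c)+\sigma^2(k,c)/p(k,c)\le (F(\pi^*(c),c)-F(k,c))^2\,u/2$. Since the uniform allocation with $u$ large is strictly feasible, Slater's condition holds, so the KKT conditions are necessary (and, by convexity, sufficient) at $p^*$.

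Next I would form the Lagrangian with multipliers $\lambda_{k,c}\ge 0$ for the inequalities and $\gamma$ for the budget constraint, and extract the stationarity equations: in $u$, $\sum_{c}\sum_{k\neq\pi^*(c)}\lambda_{k,c}(F(\pi^*(c),c)-F(k,c))^2/2=1$; in $p(k,c)$ for $k\neq\pi^*(c)$, $\gamma=\lambda_{k,c}\,\sigma^2(k,c)/p^*(k,c)^2$; and in $p(\pi^*(c),c)$, $\gamma=\sum_{k\neq\pi^*(c)}\lambda_{k,c}\,\sigma^2(\pi^*(c),c)/p^*(\pi^*(c),c)^2$. The first equation shows the $\lambda_{k,c}$ are not all zero, and the second then forces $\gamma>0$ (if $\gamma=0$, all $\lambda_{k,c}$ with $k\neq\pi^*(c)$ vanish, a contradiction); hence $\lambda_{k,c}>0$ for every pair, and complementary slackness makes every inequality active:
\begin{equation*}
  \frac{(F(\pi^*(c),c)-F(k,c))^2}{2\bigl(\sigma^2(\pi^*(c),c)/p^*(\pi^*(c),c)+\sigma^2(k,c)/p^*(k,c)\bigr)}=\frac1u\qquad\text{for all }c,\ k\neq\pi^*(c).
\end{equation*}
In the lemma's notation this is $\eta(k,c)=\eta(k',c)$ and $\eta(k,c)=\eta(k',c')$, i.e.\ the second and third identities. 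For the first (balance) identity I would substitute $\lambda_{k,c}=\gamma\,p^*(k,c)^2/\sigma^2(k,c)$ into the $p(\pi^*(c),c)$ stationarity equation and cancel $\gamma>0$, obtaining $\dfrac{\sigma^2(\pi^*(c),c)}{p^*(\pi^*(c),c)^2}\sum_{k\neq\pi^*(c)}\dfrac{p^*(k,c)^2}{\sigma^2(k,c)}=1$, which rearranges to $\dfrac{p^*(\pi^*(c),c)^2}{\sigma^2(\pi^*(c),c)}=\sum_{k\neq\pi^*(c)}\dfrac{p^*(k,c)^2}{\sigma^2(k,c)}$.

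I expect the crux to be the two ``soft'' steps rather than the algebra: arguing that $p^*$ is interior (so the stationarity equations hold with equality and no boundary multipliers intrude), and the sign bookkeeping that gives $\gamma>0$ — it is precisely this positivity that upgrades the KKT stationarity conditions to the clean equalities in the statement and forces \emph{all} constraints, across all contexts and suboptimal alternatives, to be simultaneously tight. Since this result is exactly Lemma~1 of \cite{Gao2019Covariates}, an acceptable alternative is to cite it directly and include the short KKT computation above for self-containedness.
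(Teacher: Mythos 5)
Your KKT derivation is correct: the epigraph reformulation is a convex program, the interiority and $\gamma>0$ arguments are sound, complementary slackness does force all rate constraints tight (giving the second and third identities), and substituting $\lambda_{k,c}=\gamma\,p^*(k,c)^2/\sigma^2(k,c)$ into the stationarity condition for $p(\pi^*(c),c)$ yields the balance equation. The paper itself gives no proof — it imports the statement as Lemma 1 of the cited C-OCBA reference, which derives it by exactly this kind of KKT analysis of the rate-function maximization — so your argument is essentially the same approach, written out self-containedly.
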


The following result states that if the GP prior assumes no correlation between contexts, then the allocation ratio produced by GP-C-OCBA converges to $p^*(k, c)$, i.e., that GP-C-OCBA asymptotically obtains the optimal static allocation ratio.

\begin{theorem}\label{theorem-optimal-rate}
    Suppose that Assumption \ref{assumption-consistency} holds, and the GP prior satisfies \linebreak $\Sigma_0(\mathcal{C}, \mathcal{C}; k) = diag([\Sigma_0(k, c)]_{c \in \mathcal{C}}), \forall k$. Then, 
    \begin{align*}
        \frac{\hat{p}_n(\pi^*(c), c)^2}{\sigma^2(\pi^*(c), c)} - \sum_{k \neq \pi^*(c)} \frac{\hat{p}_n(k, c)^2}{\sigma^2(k, c)} &\rightarrow 0, \forall c \in \mathcal{C} \\
        \hat{\eta}_n(k, c) - \hat{\eta}_n(k', c) &\rightarrow 0, \forall c \in \mathcal{C}, k \neq k' \neq \pi^*(c) \\
        \hat{\eta}_n(k, c) - \hat{\eta}_n(k', c') &\rightarrow 0, \forall c \neq c', k \neq \pi^*(c), k' \neq \pi^*(c'),
    \end{align*}
    almost surely as $n \rightarrow \infty$. In other words, $\hat{p}_n(k, c)$ satisfies the conditions of Lemma \ref{lemma-optimal-allocation} and converges to $p^*(k, c)$ as $n \rightarrow \infty$.
\end{theorem}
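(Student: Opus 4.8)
The plan is to show that, under the extra hypothesis $\Sigma_0(\mathcal{C},\mathcal{C};k)=\mathrm{diag}([\Sigma_0(k,c)]_{c\in\mathcal{C}})$, the GP-C-OCBA recursion is asymptotically identical to the idealized OCBA balancing recursion on the true quantities $F$ and $\sigma^2$, and then to run the OCBA-type convergence argument (in the spirit of the single-context analysis of OCBA-2 in \cite{li-gao2021convergence}, extended to handle the context dimension via the $\zeta$ and $\psi$ comparisons that already appear in Algorithm \ref{alg:gp-c-ocba}). The key simplification from the diagonal prior is that the GP for alternative $k$ decouples into $|\mathcal{C}|$ independent single-point Gaussian models, so that $\Sigma_n(k,c)=\bigl(\Sigma_0(k,c)^{-1}+N_n(k,c)/\sigma^2(k,c)\bigr)^{-1}$ exactly, hence $N_n(k,c)\,\Sigma_n(k,c)\to\sigma^2(k,c)$ whenever $N_n(k,c)\to\infty$. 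Combining this with Theorem \ref{theorem-infinitely-sample} (every pair sampled infinitely often), Lemma \ref{lemma-posterior-mean-consistency} ($\mu_n(k,c)\to F(k,c)$ a.s., hence $\pi^n(c)=\pi^*(c)$ eventually), and the boundedness of $\hat{p}_n$, I would first prove
\begin{align*}
    \frac{1}{n}\zeta(k,c) - \hat{\eta}_n(k,c) &\to 0, \\
    \frac{1}{n}\psi^{(1)}(c) - \frac{\hat{p}_n(\pi^*(c),c)^2}{\sigma^2(\pi^*(c),c)} &\to 0, \\
    \frac{1}{n}\psi^{(2)}(c) - \sum_{k\neq\pi^*(c)}\frac{\hat{p}_n(k,c)^2}{\sigma^2(k,c)} &\to 0,
\end{align*}
almost surely and uniformly over $(k,c)$.

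Second, I would establish a no-starvation bound: $\liminf_n \hat{p}_n(k,c)>0$ for every $(k,c)$. This upgrades "sampled infinitely often" to "sampled a positive fraction of the time", which is needed so that the ratios $\hat{\eta}_n(k,c)$ stay away from their degenerate limits and the displayed approximations are genuinely informative. The argument is by contradiction along a subsequence: if $\liminf_n\hat{p}_n(k,c)=0$ for some pair, then on that subsequence $\zeta(k,c)$ is of order $n\hat{\eta}_n(k,c)$ with a numerator bounded below and a denominator dominated by the small-count term, so $(k,c)$ is (essentially) the $\arg\min$ of $\zeta$ infinitely often, and the $\psi^{(1)}$-versus-$\psi^{(2)}$ branch then routes samples into context $c$ at a rate that forces $\hat{p}_n$ back up. Running this simultaneously over the finite set of candidate starved pairs, and keeping track of which member of a context the $\psi$-rule selects, is the first bookkeeping step that requires care.

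Third, with proportions bounded away from $0$, I would show the recursion drives the three gap families to zero. The mechanism is a contraction: adding a sample to the context $\tilde{c}^*$ that (approximately) minimizes $\hat{\eta}_n$ and, within it, the member chosen by the $\psi$-comparison, increases the corresponding $\hat{p}_n$ and hence raises $\hat{\eta}_n(\tilde{k}^*,\tilde{c}^*)$, while decreasing all other $\hat{p}_n$ and hence lowering the other $\hat{\eta}_n$ values; iterating along an infinite sample path forces $\max_{k\neq\pi^*(c),c}\hat{\eta}_n-\min_{k\neq\pi^*(c),c}\hat{\eta}_n\to 0$ (the second and third limiting conditions), while the $\psi^{(1)}\lessgtr\psi^{(2)}$ test simultaneously forces $\hat{p}_n(\pi^*(c),c)^2/\sigma^2(\pi^*(c),c)-\sum_{k\neq\pi^*(c)}\hat{p}_n(k,c)^2/\sigma^2(k,c)\to 0$ (the first condition). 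I expect this to be the main obstacle: one must control the $o(1)$ approximation errors from the first step so they do not swamp the $\delta$-gaps, and handle the one-sample-at-a-time discreteness. I would formalize it by assuming some gap exceeds $\delta>0$ infinitely often, extracting a convergent subsequence of $\hat{p}_n$ (possible by compactness of the simplex), and deriving a contradiction with the sampling rule on that subsequence.

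Finally, I would conclude by a standard compactness/uniqueness argument: by the previous steps every subsequential limit of $\hat{p}_n$ lies in the interior of the probability simplex and satisfies the three equalities of Lemma \ref{lemma-optimal-allocation}; since that system, together with the normalization $\sum_{k,c}\hat{p}(k,c)=1$, has the unique solution $p^*$ (as established in \cite{Gao2019Covariates}), it follows that $\hat{p}_n\to p^*$ almost surely, which is the assertion of the theorem.
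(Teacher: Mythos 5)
Your step 1 is exactly the observation on which the paper's proof rests: with a diagonal prior, the posterior for each $(k,c)$ reduces to the conjugate normal form $\Sigma_n(k,c) = \left(\Sigma_0(k,c)^{-1} + N_n(k,c)/\sigma^2(k,c)\right)^{-1}$ and $\mu_n(k,c)$ is a precision-weighted average of the prior mean and $\widebar{Y}_n(k,c)$, so that $\mu_n \approx \widebar{Y}_n$ and $\Sigma_n \approx \sigma^2/N_n$ asymptotically, and hence $\zeta$, $\psi^{(1)}$, $\psi^{(2)}$ are asymptotically the C-OCBA statistics. The paper, however, stops there: it concludes that GP-C-OCBA is (asymptotically) identical to C-OCBA with known observation-noise variance and invokes Theorem 2 of \cite{Gao2019Covariates}, which is precisely the statement that the C-OCBA allocation ratios converge to the solution of the balance conditions in Lemma \ref{lemma-optimal-allocation}. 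You instead set out to re-prove that cited theorem from scratch, and this is where the genuine gap lies.

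Your steps 2 and 3 --- the no-starvation bound $\liminf_n \hat{p}_n(k,c) > 0$ and the contraction argument forcing the $\hat{\eta}_n$ gaps and the $\psi$-balance to vanish --- are the entire technical content of the result being cited, and in your proposal they are only described, not established. You yourself flag the unresolved difficulties: controlling the $o(1)$ approximation errors relative to the $\delta$-gaps, handling the one-sample-at-a-time discreteness, and the bookkeeping over which pair the $\psi^{(1)}$-versus-$\psi^{(2)}$ branch selects among several simultaneously starved pairs. As written, the contradiction-along-a-subsequence argument in step 3 is a plausible strategy (it is essentially how \cite{Gao2019Covariates} and \cite{li-gao2021convergence} argue), but "extracting a convergent subsequence and deriving a contradiction with the sampling rule" is the hard part, not a routine step; Theorem \ref{theorem-infinitely-sample} and Lemma \ref{lemma-posterior-mean-consistency} give you $N_n(k,c)\to\infty$ and $\pi^n(c)=\pi^*(c)$ eventually, but by themselves they do not yield positive limiting fractions nor the balance of the $\hat{\eta}_n$'s. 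So either carry those two steps out in full detail (at which point you will have reproduced the proof of Theorem 2 of \cite{Gao2019Covariates}), or do what the paper does: after your step 1, note that the algorithm's sampling decisions asymptotically coincide with those of C-OCBA with known noise variance and appeal to that theorem directly, reserving \cite{Gao2019Covariates} not just for the uniqueness of the solution of the KKT system (your step 4) but for the convergence of the allocation itself. Your final compactness/uniqueness step is fine once steps 2--3 are in place.
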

\begin{proof}
    If we use an uninformative prior, i.e., $\Sigma_0(k, c) = \infty$, then $\mu_n(\cdot, \cdot)$ and $\Sigma_n(\cdot, \cdot)$ simplify to the sample mean, $\widebar{Y}_n(k, c)$, and $\sigma^2(k, c) / N_n(k, c)$, respectively. In which case, GP-C-OCBA is identical to C-OCBA with known observation noise, and the result follows from Theorem 2 of \cite{Gao2019Covariates}. In the case when $\Sigma_0(k, c) < \infty$, the estimators involve additional terms for the prior, i.e., $\mu_n(k, c) = \frac{\mu_0(k, c)/\Sigma_0(k, c) + N_n(k, c) \widebar{Y}_n(k, c) / \sigma^2(k, c)}{1 / \Sigma_0(k, c) + N_n(k, c) / \sigma^2(k, c)}$ and $\Sigma_n(k, c) = \frac{1}{1 / \Sigma_0(k, c) + N_n(k, c) / \sigma^2(k, c)}$. These are asymptotically equivalent to $\mu_n(k, c) \xrightarrow{\approx} \widebar{Y}_n(k, c)$ and $\Sigma_n(k, c) \xrightarrow{\approx} \sigma^2(k, c) / N_n(k, c)$. Thus, the effects of the prior terms disappear in the asymptotic analysis and the same proof applies.
\end{proof}

Although Theorem \ref{theorem-optimal-rate} shows that the sample allocation ratio of GP-C-OCBA converges to the optimal allocation ratio, this, by itself, does not say much about the actual convergence rate of the contextual $PCS$. In the following, we extend the convergence rate result presented by \cite{li-gao2021convergence} for the OCBA algorithm to the contextual setting. We show that the contextual $PFS$ converges to $0$ at the optimal exponential rate. To the best of our knowledge, this is the first convergence rate result in the literature for a contextual R\&S algorithm.

\begin{theorem} \label{theorem-convergence-rate}
    Under the conditions of Theorem \ref{theorem-optimal-rate}, and using an uninformative prior, i.e., $\Sigma_0(k, c) = \infty$, the contextual probability of false selection produced by GP-C-OCBA policy decreases at the optimal exponential rate, i.e.,
    \begin{equation*}
        PFS_{\sim}^n \doteq e^{- \eta^* n / 2}, \text{ where } \eta^* = \min_{c \in \mathcal{C}} \min_{k \neq \pi^*(c)} \eta(k, c),
    \end{equation*}
    where $PFS^n_{\sim}$ is either of $PFS^n_E$ or $PFS^n_M$, and $\doteq$ denotes logarithmic equivalence, i.e., for two sequences $a_n \doteq b_n \iff \lim_{n \rightarrow \infty} \frac{1}{n} \log \frac{a_n}{b_n} = 0$. 
\end{theorem}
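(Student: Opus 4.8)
The plan is to reduce the contextual $PFS$ to pairwise misidentification probabilities and then argue that, with an uninformative prior, GP-C-OCBA behaves asymptotically, \emph{context by context}, like the single-context OCBA-2 procedure of \cite{li-gao2021convergence}. With $\Sigma_0(k,c) = \infty$ one has $\mu_n(k,c) = \widebar{Y}_n(k,c)$ and $\Sigma_n(k,c) = \sigma^2(k,c)/N_n(k,c)$, so for each context
\begin{equation*}
    \max_{k \neq \pi^*(c)} P\big(\widebar{Y}_n(k,c) \ge \widebar{Y}_n(\pi^*(c),c)\big) \le PFS^n(c) \le \sum_{k \neq \pi^*(c)} P\big(\widebar{Y}_n(k,c) \ge \widebar{Y}_n(\pi^*(c),c)\big).
\end{equation*}
Since $PFS^n_E = \sum_c w(c) PFS^n(c)$ with $w(c) > 0$ and $PFS^n_M = \max_c PFS^n(c)$, and since $\doteq$ ignores the constants $|\mathcal{C}|$, $K$, $\min_c w(c)$ and any sub-exponential factor in $n$, it suffices to show that for every $(k,c)$ with $k \ne \pi^*(c)$ one has $P(\widebar{Y}_n(k,c) \ge \widebar{Y}_n(\pi^*(c),c)) \doteq e^{-\eta(k,c)n/2}$; then $PFS^n_{\sim} \doteq \max_c\max_{k\ne\pi^*(c)} e^{-\eta(k,c)n/2} = e^{-\eta^* n/2}$. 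Throughout I would invoke Theorem~\ref{theorem-optimal-rate}, which gives $\hat p_n(k,c) = N_n(k,c)/n \to p^*(k,c) > 0$, hence $\hat\eta_n(k,c) \to \eta(k,c)$, almost surely.

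For the lower bound (no allocation beats $\eta^*$), I would fix $\epsilon>0$ and restrict to $A_n = \{N_n(k,c) \le (p^*(k,c)+\epsilon)n\ \forall(k,c)\}$, for which $P(A_n)\to 1$ by Theorem~\ref{theorem-optimal-rate}. On $A_n$ the relevant sample sizes are at most linear in $n$, so the Gaussian lower-tail (Mills-ratio) bound applied to the normal difference $\widebar Y_n(k,c) - \widebar Y_n(\pi^*(c),c)$ yields $P(\widebar Y_n(k,c) \ge \widebar Y_n(\pi^*(c),c)) \ge c_1 n^{-1/2} e^{-n(\eta(k,c)+O(\epsilon))/2}$ for large $n$. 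The one subtlety is that $N_n(k,c)$ is determined by the observations rather than fixed in advance; as in \cite{Glynn2004LargeDev,li-gao2021convergence} this is handled by a union bound over the $O(n)$ admissible count vectors, or by an exponential-martingale change-of-measure identity, at a cost of only sub-exponential factors. Sending $n\to\infty$ and then $\epsilon\to 0$ gives $\liminf_n \tfrac1n\log P(\widebar Y_n(k,c)\ge\widebar Y_n(\pi^*(c),c)) \ge -\eta(k,c)/2$.

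For the matching upper bound I would write $P(\widebar Y_n(k,c)\ge\widebar Y_n(\pi^*(c),c)) \le P(B_n^c) + P(\{\widebar Y_n(k,c)\ge\widebar Y_n(\pi^*(c),c)\}\cap B_n)$, where $B_n = \{|N_n(k,c)/n - p^*(k,c)|\le\epsilon,\ |N_n(\pi^*(c),c)/n - p^*(\pi^*(c),c)|\le\epsilon\}$. On $B_n$ the two sample sizes are pinned to within $\epsilon n$ of $p^* n$, so a union bound over the $O(n^2)$ feasible count pairs together with the Chernoff bound controls the second term by $\mathrm{poly}(n)\,e^{-n(\eta(k,c)-O(\epsilon))/2}$. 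The crux is $P(B_n^c)$: the plain a.s.\ convergence of Theorem~\ref{theorem-optimal-rate} does not suffice, and a crude concentration bound for $\hat p_n$ decays too slowly (its rate vanishes as $\epsilon\to 0$). Here I would reconstruct the argument of \cite{li-gao2021convergence}: using that GP-C-OCBA always samples the pair with the smallest current rate $\hat\eta_n$ (equivalently the smallest $\zeta(k,c)$ in Algorithm~\ref{alg:gp-c-ocba}), one couples the evolution of the empirical means $\widebar Y_m(\cdot,\cdot)$ with that of the allocation counts over the window $m\in(\epsilon n,n]$ and shows that $\{B_n^c\}$ forces a deviation of some empirical mean on the order of the confusion gaps $F(\pi^*(c),c)-F(k,c)$, so that $P(B_n^c)$ is exponentially small at a rate that, after taking $\epsilon$ small, does not undercut $\eta^*/2$. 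Granting this, $P(\widebar Y_n(k,c)\ge\widebar Y_n(\pi^*(c),c)) \le \mathrm{poly}(n)\,e^{-n(\eta(k,c)-O(\epsilon))/2}$; letting $\epsilon\to 0$ gives $\limsup_n\tfrac1n\log P(\cdot)\le -\eta(k,c)/2$, so together with the lower bound $P(\widebar Y_n(k,c)\ge\widebar Y_n(\pi^*(c),c)) \doteq e^{-\eta(k,c)n/2}$, and the sandwich then yields $PFS^n_E \doteq PFS^n_M \doteq e^{-\eta^* n/2}$.

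The only genuinely new ingredient relative to \cite{li-gao2021convergence} is the passage from a single context to $|\mathcal{C}|$ contexts: by the first two identities of Lemma~\ref{lemma-optimal-allocation} together with Theorem~\ref{theorem-optimal-rate}, the within-context ratios $(\hat p_n(k,c))_{k\in\K}$ converge to the OCBA-optimal ratios for context $c$, so the single-context machinery applies verbatim context by context, and the aggregation over contexts costs only a factor $|\mathcal{C}|$ on the upper side and the selection of a least-favorable context on the lower side. I expect the main obstacle to be exactly the bound on $P(B_n^c)$ — establishing that a sustained suboptimal allocation is no more likely than a false selection itself — since this requires reasoning about the coupled dynamics of the empirical means and the allocation counts over $\Theta(n)$ iterations rather than a static concentration inequality; everything else (the reduction, the Chernoff/Mills-ratio estimates, the union bounds over contexts and counts) is routine.
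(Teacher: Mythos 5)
Your opening reduction is exactly the paper's: sandwich $PFS^n_{\sim}$ between constant multiples of $\max_{c}\max_{k\neq\pi^*(c)} P(\mu_n(\pi^*(c),c)<\mu_n(k,c))$, and note that the uninformative prior makes $\mu_n(k,c)=\widebar{Y}_n(k,c)$ and $\Sigma_n(k,c)=\sigma^2(k,c)/N_n(k,c)$. After that, however, the paper's proof is much shorter than what you set up: it writes the pairwise misselection probability in closed form as $\Phi\left(-\sqrt{\hat{\eta}_n(k,c)\,n}\right)$ (citing \cite{Chen2010OCBA}), invokes the tail equivalence $\Phi(x)\doteq e^{-x^2/2}$ for $x<0$ \cite{Gordon1941}, and then substitutes $\hat{\eta}_n(k,c)\rightarrow\eta(k,c)$, which follows from the almost sure convergence $\hat{p}_n(k,c)\rightarrow p^*(k,c)$ of Theorem \ref{theorem-optimal-rate}. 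No event decomposition, no union bounds over admissible count vectors, and no control of the probability of atypical allocations appears in the paper's argument.

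The genuine gap in your proposal is the step you yourself single out: the exponential bound on $P(B_n^c)$, i.e., that a sustained deviation of the empirical allocation from $p^*$ is itself exponentially unlikely at a rate no smaller than $\eta^*/2$. You state that you would ``reconstruct the argument of \cite{li-gao2021convergence}'' by coupling the empirical means with the allocation counts over a window of $\Theta(n)$ iterations, and then proceed ``granting this''; since your entire upper bound hinges on that estimate, the proposal is incomplete at its central point, and nothing in the paper (Theorem \ref{theorem-optimal-rate} gives only almost sure convergence of $\hat{p}_n$, with no rate) hands it to you. To be fair, your underlying concern—that almost sure convergence of the random allocation ratios does not by itself translate into a statement about the deterministic probabilities—is precisely the subtlety the paper's proof passes over by treating the probability through the closed-form $\Phi$ expression with the random argument $\hat{\eta}_n$ and then taking its a.s.\ limit. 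So your instinct identifies a real delicacy, but as written your route is strictly harder than the paper's and stops exactly where the hard work would begin, whereas the paper's own proof never needs (and never supplies) the $P(B_n^c)$ machinery.
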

\begin{proof}
    Recall from the proof of Theorem \ref{thm-pcs-rate-function} that $PFS^n_{\sim}$ can be lower and upper bounded by a constant ($1$ and $|\mathcal{C}|(|\mathcal{K}| - 1)$, respectively) multiple of
    \begin{equation*}
        \max_{c \in \mathcal{C}}\max_{k \neq \pi^*(c)} P(\mu_n(\pi^*(c), c) < \mu_n(k, c)).
    \end{equation*}
    With the prior as specified, the posterior mean is equal to the sample mean. In which case \cite{Chen2010OCBA}
    \begin{equation*}
        P(\mu_n(\pi^*(c), c) < \mu_n(k, c)) = \Phi \left( - \sqrt{\hat{\eta}_n(k, c) n} \right),
    \end{equation*}
    where $\Phi(\cdot)$ denotes the cumulative distribution function of the standard Gaussian distribution. 
    Since for $x < 0$, $\Phi(x) \doteq \exp(- x^2 / 2)$ \cite{Gordon1941}, we get
    \begin{equation*}
        P(\mu_n(\pi^*(c), c) < \mu_n(k, c)) \doteq \exp \left( - \hat{\eta}_n(k, c) n / 2 \right).
    \end{equation*}
    By Theorem \ref{theorem-optimal-rate}, $\hat{p}_n(k, c) \rightarrow p^*(k, c)$, which implies that $\hat{\eta}_n(k, c) \rightarrow \eta(k, c)$ as $n \rightarrow \infty$. Thus, putting it all together, we get 
    \begin{equation*}
        PFS^n_{\sim} \doteq \exp \left(- \left(\min_{c \in \mathcal{C}} \min_{k \neq \pi^*(c)} \eta(k, c)\right)n / 2 \right).
    \end{equation*}
\end{proof}

Note that the Theorem \ref{theorem-optimal-rate} is an extension of the result originally proved for C-OCBA \cite{Gao2019Covariates} and the convergence rate result in Theorem \ref{theorem-convergence-rate} also applies to C-OCBA under the assumption of known observation noise.
This can be interpreted as saying that GP-C-OCBA and C-OCBA are asymptotically equivalent. A natural question in this case is whether GP-C-OCBA can be shown to have an advantage over C-OCBA under a finite budget setting.
Though it is difficult to show this rigorously, we can argue for it in an informal way. By modeling the performance of an alternative-context with a Gaussian process, we implicitly assume that the contexts that are similar will have similar performances, with the particular measure of similarity being learned from the data. As a result, for a given alternative-context, by learning about the contexts that are similar to this context, we can refine our model of the contexts without having to evaluate the context itself.
This is similar to the linearity assumptions that we see in the literature, e.g., in \cite{shen2021ranking}. In their case, there's an explicit assumption of linearity, which, if holds true, allows for evaluating only the extreme contexts to determine the policy for all contexts. Due to the assumed linearity, the performance of any given context can be estimated by solving the regression equations. There are many cases where linearity provides a good approximation, but there are many others where it does not hold true at all. Our modeling of similarity, as opposed to linearity, works in a similar way. It does not eliminate the need to evaluate all contexts. However, by allowing inference on one context's performance via other similar contexts' evaluations, it reduces the number of evaluations needed, leading to a better finite time performance.
An example of this was shown in Figure \ref{fig:gp_example}, where we saw that the GP posterior mean produced predictions that resembled the true rewards much more closely, while the sample mean predictor required many more samples to refine the estimates.

\section{NUMERICAL EXPERIMENTS} \label{section-numerical-experiments}

In this section, we demonstrate the performance of our algorithm on a set of synthetic benchmark problems. We compare our algorithm with the algorithms by \cite{li2020contextdependent} (DSCO), \cite{Gao2019Covariates} (C-OCBA), and with the IKG algorithm as described in Section \ref{section-ikg}. In addition, we also compare against a modified version of the two-stage algorithms (TS and TS+) proposed by \cite{shen2021ranking}. Both TS and TS+ are designed for the fixed confidence setting under a linearity assumption on the reward function. We adopt them by keeping the allocation ratios suggested by the algorithms but scaling down the number of samples to the given budget. As recommended by the authors, we use the extreme design for both TS and TS+, with the extreme design being restricted to the points in the context set.
We chose these benchmarks since DSCO and C-OCBA were both proposed for the contextual R\&S with the finite alternative-context setting that is studied in this paper and has demonstrated superior performance in experiments; and IKG was chosen since KG type algorithms, including variants of IKG, have consistently demonstrated superior sampling efficiency under various problem settings. 

We implemented the experiments in Python, and used the GP models from the BoTorch package \cite{balandat2020botorch} with the default priors. The GP hyper-parameters are re-trained every $10$ iterations, and we use the Matern $5/2$ kernel. The code will be made available at \url{https://github.com/saitcakmak/contextual_rs} upon publication.

\subsection{Synthetic Test Functions}
For the experiments, we generate the true rewards, $F(k, c)$, by evaluating common global optimization test functions on randomly drawn points from the function domain.
We use the first dimension of the function input for the alternatives, i.e. each alternative corresponds to a fixed value of $x_1$, and spread the alternatives evenly across the corresponding domain. The remaining input dimensions are used for the contexts, and thus contexts are $d-1$ dimensional vectors for a $d$ dimensional test function. Put together, this corresponds to $F(k, c) = f(x_k, x_c)$ where $x_k$ and $x_c$ are fixed realizations of $1$ and $d-1$ dimensional uniform random variables, respectively. 
The rewards are observed with additive Gaussian noise with standard deviation set as $\frac{f_{max} - f_{min}}{100 / 3}$,
where $f_{max}$ and $f_{min}$ are estimated using $1000$ samples drawn uniformly at random from the function domain. We use the following functions in our experiments:

\begin{itemize} 
    \item The 2D Branin function, evaluated on $[-5, 10] \times [0, 10]$:
    \begin{equation*}
        f(x) = - (x_2 - b x_1^2 + c x_1 - r)^2 - 10 (1-t) cos(x_1) - 10,
    \end{equation*}
    where $b = 5.1 / (4 \pi^2)$, $c = 5 / \pi$, $r = 6$ and $t = 1 / (8 \pi)$. We run two experiments using the Branin function, both with 10 alternatives and 10 contexts. The first objective is the expected $PCS$ with weights set arbitrarily as $[0.03, 0.07, 0.2, 0.1, 0.15, 0.2, 0.02, 0.08, 0.1, 0.05]$, and the second objective is the worst-case $PCS$. We draw $2$ samples from each alternative-context pair for the initialization phase. For TS and TS+, the extreme design consists of the smallest and largest context values, with each receiving $10$ samples from each alternative for the initialization phase.
    
    \item The 2D Griewank function, evaluated on $[-10, 10]^2$:
    \begin{equation*}
        f(x) = - \sum_{i=1}^d \frac{x_i^2}{4000} + \prod_{i=1}^d \cos \left( \frac{x_i}{\sqrt{i}} \right) - 1.
    \end{equation*}
    We run two experiments with the Greiwank function, using 10 alternatives and 20 contexts. We use the expected $PCS$ with uniform weights and the worst-case $PCS$, and initialize with $2$ samples from each alternative-context pair. For TS and TS+, the extreme design consists of the smallest and largest context values, with each receiving $20$ samples from each alternative for the initialization phase.
    
    \item The 3D Hartmann function, evaluated on $[0, 1]^3$:
    \begin{equation*}
        f(x) = \sum_{i=1}^4 \alpha_i \exp \left(-\sum_{j=1}^3 A_{ij} (x_j - P_ij)^2 \right),
    \end{equation*}
    where the constants $\alpha$, $A$, and $P$ are given in \cite{test_functions}.
    We run a single experiment with 20 alternatives and 20 contexts, using the expected $PCS$ with uniform weights. Since the number of alternative-context pairs is quite large in this experiment, we select only $6$ contexts for each alternative, uniformly at random, and draw a single sample from these contexts for the initial stage. Due to insufficient initial sampling budget, we do not run DSCO and C-OCBA for this problem. Since it is difficult to define the extreme design over a discrete set of points in $2$ dimensions, we skip TS and TS+ as well and only run the GP based algorithms for this problem.
    
    \item The 8D Cosine8 function, evaluated on $[-1, 1]^8$:
    \begin{equation*}
        f(x) = 0.1 \sum_{i=1}^8 cos(5 \pi x_i) - \sum_{i=1}^8 x_i^2.
    \end{equation*}
    We run a single experiment with the mean $PCS$ objective with uniform weights. We use 20 alternatives and 40 contexts. For the initial stage, we randomly select $16$ contexts for each alternative, and draw a single sample from these contexts, which is again due to the large number of alternative-context pairs in this experiment. Similar to the previous experiment, due to insufficient initial sampling budget, we do not run DSCO and C-OCBA for this problem. Since it is difficult to define the extreme design over a discrete set of points in $7$ dimensions, we skip TS and TS+ as well and only run the GP based algorithms for this problem.
\end{itemize}

\begin{figure}[htb]
    \centering
      \includegraphics[width=\textwidth]{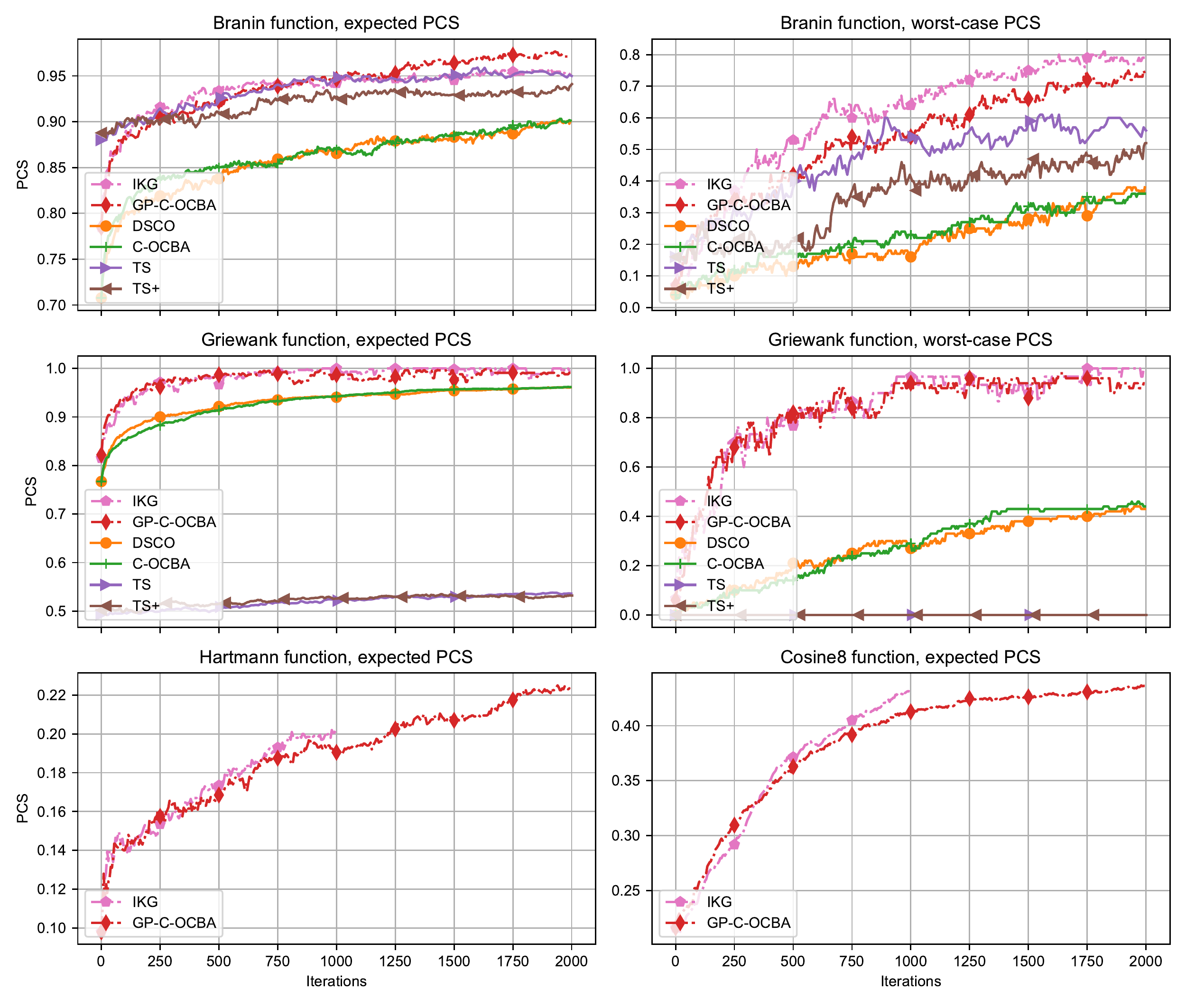}
    \caption{
    Experiments using Branin function with $PCS_E$ and $PCS_M$, Griewank function with $PCS_E$ and $PCS_M$, Hartmann function with $PCS_E$, and Cosine8 with $PCS_E$. 
    The plots show the empirical contextual $PCS$ on the y-axis, and the number of iterations/samples (post-initialization) on the x-axis.}
    \label{fig:exp-results}
\end{figure}

\subsection{Results of Synthetic Test Functions}
The experiment results are plotted in Figure \ref{fig:exp-results}. We ran each experiment for $2000$ iterations, except for IKG in Hartmann and Cosine8 functions, which were run for $1000$ iterations due to their excessive cost. The plots show the empirical contextual $PCS$, estimated using 100 replications. The first $4$ plots compare all algorithms. However, the last $2$ only compare GP-C-OCBA and IKG, due to small initial budget preventing drawing of multiple samples from each alternative-context pair, which is necessary to form an initial estimate of sample mean and variance that is used by DSCO and C-OCBA, as well as the difficulty of setting up the extreme design for the two-stage algorithms. In all $4$ of the experiments comparing all algorithms, we see that the two algorithms using the GP models achieve the highest contextual $PCS$. This demonstrates the benefit of using a statistical model that leverages the hidden correlation structure in the reward function. In particular, for the worst-case $PCS$, we see that the DSCO and C-OCBA perform significantly worse in both problems, with the performance of TS and TS+ also diminishing significantly. For TS and TS+, this clearly demonstrates the weakness of using a linear model when the underlying function is non-linear, which leads to $0$ worst-case $PCS$ in the Griewank function. Similarly, the poor performance of DSCO and C-OCBA in the worst-case $PCS$ is explained by a total of $2400$ samples being far from sufficient to form reliable estimates for $200$ alternative-context pairs using an independent statistical model. 

Although it is slightly trailing behind in some experiments, we see that GP-C-OCBA is highly competitive against IKG, while having significantly smaller computational complexity. The wall-clock times for the experiments are reported in Table \ref{table:wall-clock-time}. We see that even in the experiments with smallest number of alternatives and contexts, the IKG algorithm takes about $5$ times as long to run, with the ratio increasing significantly to about $75$ times as we move to larger experiments. The reported run times are for $1000$ iterations of a full experiment, and include the cost of fitting the GP model, which is identical for both algorithms. It is worth noting that cost of evaluating the test functions is in these experiments is insignificant compared to the running time. As the cost of function evaluation increases, the results would look nicer for IKG, though it would still remain the more expensive alternative.

\begin{table}
	\caption{Comparison of computational cost of IKG and GP-C-OCBA. We report the average wall-clock time, in seconds, for running $1000$ iterations of the given experiment. The experiments were run on a shared cluster using $4$ cores of the allocated CPU. To save on space, we report the average run-time (in seconds) of the two objectives for Branin and Griewank.}
	\label{table:wall-clock-time} \centering
\begin{tabular}{ccccc}
    \toprule
	Algorithm & Branin $PCS_E$ / $PCS_M$ & Griewank $PCS_E$ / $PCS_M$ & Hartmann & Cosine8 \\
	\midrule
	IKG & 1218 & 3807 & 11096 & 43224 \\
	GP-C-OCBA & 249 & 293 & 441 & 544 \\
    \bottomrule
\end{tabular}
\end{table}

Overall, the experiments show that GP-C-OCBA is highly competitive in terms of sampling efficiency, while being significantly cheaper than other high performing benchmarks. We believe that this makes GP-C-OCBA an attractive option for any practitioner that is faced with a contextual R\&S problem.

\subsection{Covid-19 Testing Allocation}
We consider the problem of allocating a fixed supply of Covid-19 tests between three populations, with the aim of minimizing the total number of infections by the end of the testing period of two weeks. 
We have access to a simulator that can be used to simulate the disease spread within the populations and to predict the number of infections. 
The simulator takes in a set of disease and population parameters (assumed fixed), the probability of an exposed person getting infected, the expected number of daily contacts for each person (the contexts), and the allocation of the testing capacity between populations (the alternatives / designs); simulates the interactions within each population, the progression of the infected individuals through disease stages, quarantining of the individuals that tests positive etc.; and reports the total number of infected individuals. 

The simulator was originally developed by \cite{covid-github}, and the problem setting is a modification from \cite{Cakmak2020borisk}, which studied it under a risk-averse setting. We consider the case where we have a finite simulation budget to experiment with possible testing allocations under any of the $16$ possible contexts, which are combinations of the exposed infection probability and the expected daily contacts, which independently each take $4$ equally spaced values range from $[0.015, 0.03]$ and $[6, 12]$, respectively. 
The effective context will be determined based on an analysis of recent positive cases, the prevalence of different strains of the virus leading to different levels of infectiousness, as well as the recent contact tracing and survey data, that will be used to estimate the number of daily contacts. 
Once the true context / disease prevalence is revealed, the tests must be allocated immediately, using the best contextual allocation obtained during the experimentation phase. It is assumed that there are a total of $10000$ tests, which must be allocated in lots of $1000$'s. For fairness reasons, each population must be allocated at least $20\%$ and at most $50\%$ of the tests, leading to a total of $12$ possible designs. 

To initialize the algorithms, we use $2$ samples from each alternative-context pair, except for TS and TS+ where the contexts are restricted to the $4$ extreme designs, which are the corner points of the context grid. We plot the expected $PCS$ obtained by the algorithms against both the total number of samples used (including initialization), as well as against the wall-clock time w.r.t. varying values of the simulation time.

\begin{figure}[htb]
    \centering
      \includegraphics[width=0.6\textwidth]{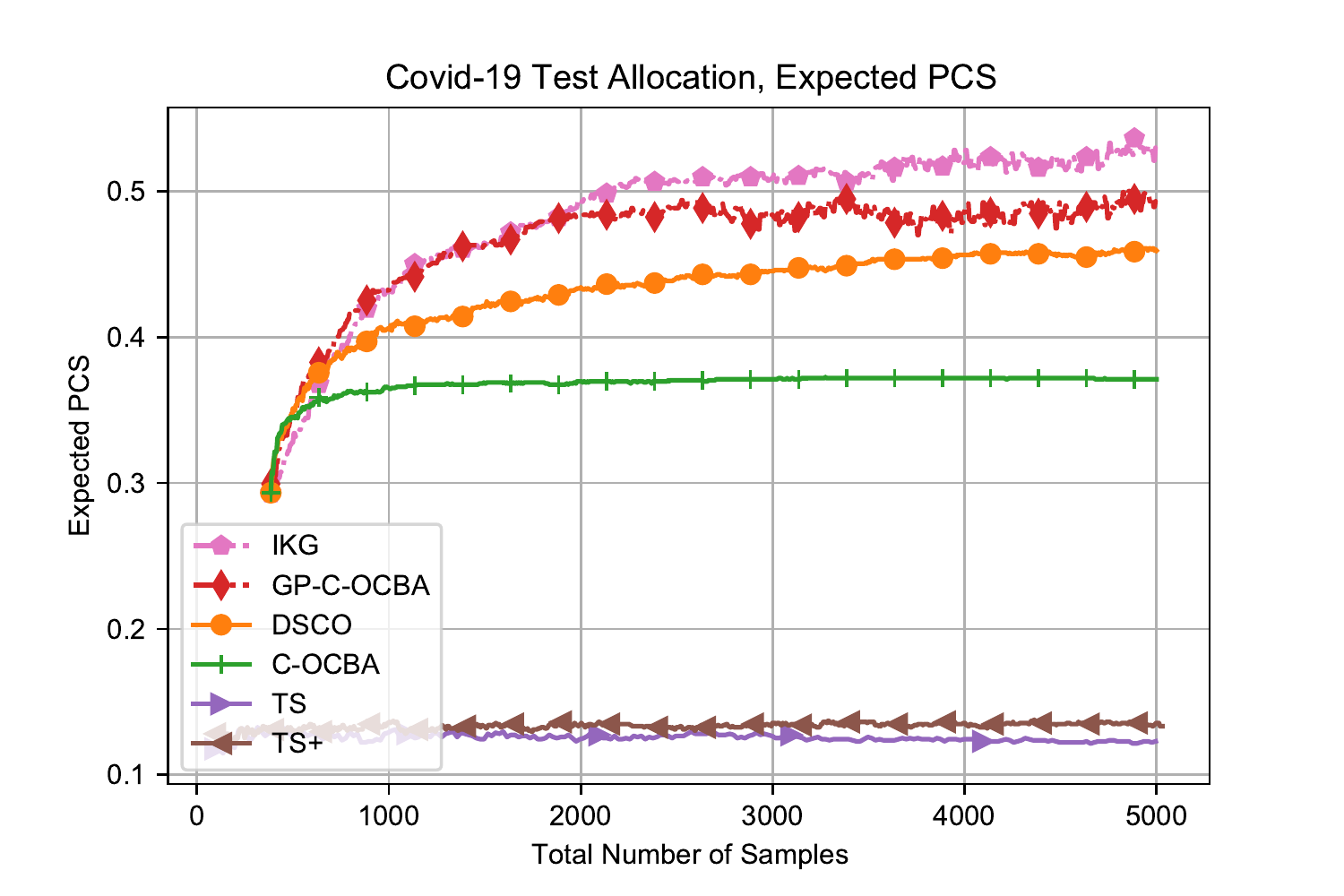}
    \caption{The expected $PCS$ in the Covid-19 Testing Allocation problem, plotted against the total number of samples used.}
    \label{fig:covid_iters}
\end{figure}

\begin{figure}[htb]
    \centering
      \includegraphics[width=\textwidth]{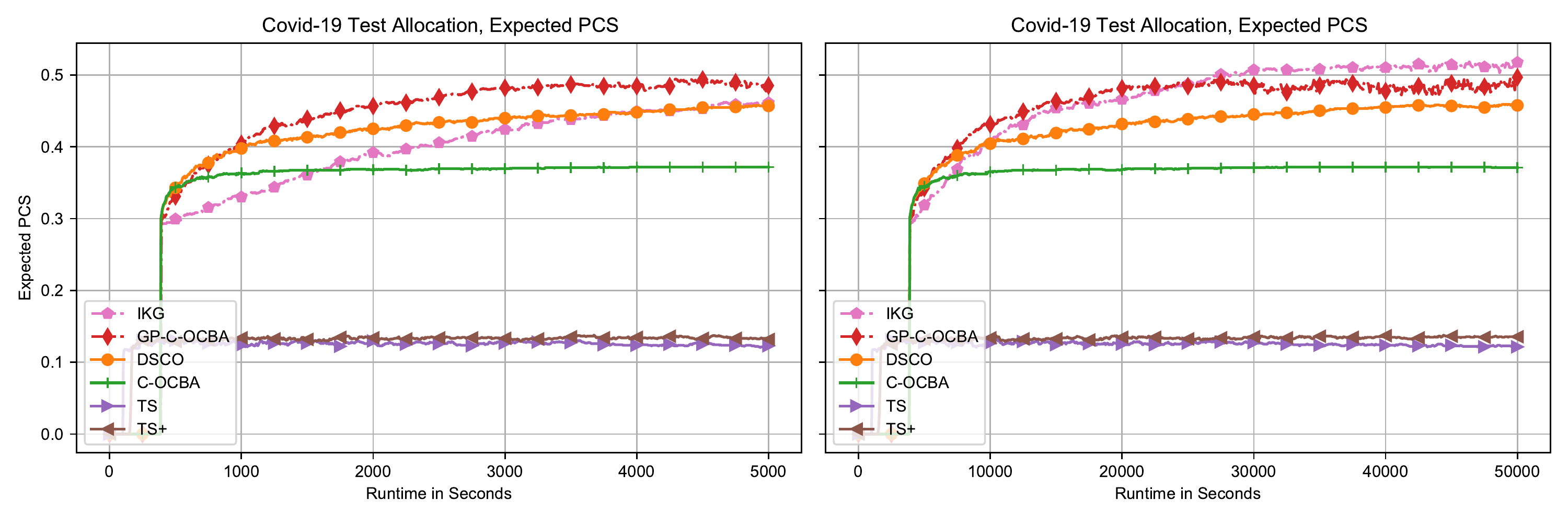}
    \caption{The expected $PCS$ in the Covid-19 Testing Allocation problem, plotted against the total wall-time (in seconds). The simulation time is $1$ and $10$ seconds for the left and right plots, respectively.}
    \label{fig:covid_time}
\end{figure}

The results, plotted in Figures \ref{fig:covid_iters} \& \ref{fig:covid_time} show that GP-C-OCBA overall remains very competitive. The IKG performs the best in terms of the total number of samples used, though its performance is overshadowed by its run-time unless the simulations are expensive enough. For the rest of the algorithms, with the exception of DSCO, we do not see a significant improvement in performance as more samples are allocated. 
This can be explained by the statistical models used. Compared to GP-C-OCBA, C-OCBA requires many samples to refine the performance estimates due to total independence in the statistical model. 
For TS and TS+, the issue can be attributed to the underlying linearity assumption, which does not hold in this case. Judging by the $PCS$, the linear model fails to accurately estimate the performances even in the extreme designs that were used to sample, which, if estimated perfectly, would lead to an expected $PCS$ of $0.25$. In light of these results, we caution on using a linear model unless there is strong evidence that the underlying reward function satisfies the linear structure assumed by the model (e.g., is a linear function of contexts), and instead recommend using algorithms based on more flexible non-linear GP models, such as the GP-C-OCBA and IKG algorithms.

\section{CONCLUSION} \label{section-conclusion}
We studied the contextual R\&S problem under finite alternative-context setting, using a separate GP to model the reward for each alternative. We derived the large deviations rate functions for the contextual $PCS$, and proposed the GP-C-OCBA algorithm that aims to maximize the rate function using the information available in the GP posterior. 
We proved consistency of GP-C-OCBA. We showed that its allocations converge to the optimal allocation ratio and the resulting contextual PFS converges to $0$ at the optimal exponential rate. 
In numerical experiments using both synthetic test functions and a Covid-19 simulator, GP-C-OCBA was shown to achieve significant sampling efficiency, while having a significantly smaller computational overhead compared to other competitive alternatives.

\begin{acks}
 The first and third authors gratefully acknowledge the support by \grantsponsor{afosr}{Air Force Office of Scientific Research}{https://www.afrl.af.mil/AFOSR/} under Grant \grantnum[https://www.afrl.af.mil/AFOSR/]{afosr}{FA9550-19-1-0283} and the support by \grantsponsor{nsf}{National Science Foundation}{https://www.nsf.gov} under Grant \grantnum[https://www.nsf.gov]{nsf}{DMS2053489}. 
\end{acks}

\bibliographystyle{ACM-Reference-Format}
\bibliography{ref}  

\appendix

\section{Proof of Theorem \ref{thm-pcs-rate-function}}
With $n$ denoting the total number of observations / samples, let $p(k, c)$ and $N_n(k, c) = n p(k, c)$ denote the fraction and the total number, respectively, of samples allocated to $(k, c)$. 
Both $p(k, c)$ and $N(k, c)$ are determined by the sampling policy and are assumed to be strictly positive. We ignore the technicalities arising from $N(k, c)$ not being an integer. For sake of simplicity, we leave implicit the dependency of the probabilities and other quantities on the sampling policy. We start with the following lemma, which is crucial in proving the theorem.

\begin{lemma} \label{lemma-rate-func-term}
    Under assumptions of Theorem \ref{thm-pcs-rate-function}, for $k \neq \pi^*(c)$,
    \begin{equation*}
        \lim_{n \rightarrow \infty} \frac{1}{n} \log P(\mu_n(\pi^*(c), c) < \mu_n(k, c)) = -G_{(k, c)} (p(\pi^*(c), c), p(k, c)),
    \end{equation*}
    where 
    \begin{equation*}
        G_{(k, c)} (p({\pi^*(c)}, c), p(k, c)) = \frac{(F({\pi^*(c)}, c) - F(k, c))^2}{2(\sigma^2({\pi^*(c)}, c) / p({\pi^*(c)}, c) + \sigma^2(k, c) / p(k, c))}.
    \end{equation*}
\end{lemma}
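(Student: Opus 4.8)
The strategy is the one advertised in the text: show that, for the purpose of large deviations, the posterior mean $\mu_n(k,c)$ behaves like the sample mean $\bar{Y}_{N_n(k,c)}(k,c)$, and then run the G\"{a}rtner--Ellis argument of Glynn and Juneja. Because the noise is Gaussian and the GP posterior mean is a fixed linear functional of the observations, $\mu_n(k,c)$ is Gaussian; and since the alternatives $\pi^*(c)$ and $k$ are modeled by independent GPs, $Z_n := \mu_n(\pi^*(c),c) - \mu_n(k,c)$ is Gaussian too. Hence its scaled cumulant generating function is exactly
\begin{equation*}
  \tfrac1n\log \E\!\left[e^{\theta n Z_n}\right] = \theta\,\E[Z_n] + \tfrac{\theta^2}{2}\, n\,\mathrm{Var}(Z_n),
\end{equation*}
so the whole problem reduces to identifying $\lim_n \E[Z_n]$ and $\lim_n n\,\mathrm{Var}(Z_n)$.

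\textbf{The decomposition.} To compute the limiting moments of $\mu_n(k,c)$, I would split the observations for alternative $k$ into the $N:=N_n(k,c)=n\,p(k,c)$ samples taken at context $c$ and the remaining samples at other contexts, conditioning on the latter first. Writing $\mu'_n(k,c),\Sigma'_n(k,c)$ for the posterior at $c$ obtained from the other-context data alone, a one-dimensional conjugate update (using that $N$ i.i.d.\ noisy observations of $F(k,c)$ are equivalent to one observation $\bar{Y}_N(k,c)$ with noise variance $\sigma^2(k,c)/N$) gives
\begin{equation*}
  \mu_n(k,c) = \alpha_n\,\bar{Y}_N(k,c) + (1-\alpha_n)\,\mu'_n(k,c), \qquad \alpha_n = \frac{\Sigma'_n(k,c)}{\Sigma'_n(k,c) + \sigma^2(k,c)/N}.
\end{equation*}
Since $\bar{Y}_N(k,c)$ and $\mu'_n(k,c)$ are functions of disjoint sets of independent observations, they are independent, whence $\E[\mu_n(k,c)] = \alpha_n F(k,c) + (1-\alpha_n)\E[\mu'_n(k,c)]$ and $\mathrm{Var}(\mu_n(k,c)) = \alpha_n^2\sigma^2(k,c)/N + (1-\alpha_n)^2\mathrm{Var}(\mu'_n(k,c))$. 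Now $\Sigma'_n(k,c)\le \Sigma_0(k,c)<\infty$ and, using nondegeneracy of the prior covariance (as for the standard kernels with distinct contexts, or under Lemma~\ref{lemma-posterior-variance}), $\Sigma'_n(k,c)$ stays bounded away from $0$; therefore $1-\alpha_n = O(1/n)$. One also checks that $\E[\mu'_n(k,c)]$ and $\mathrm{Var}(\mu'_n(k,c))$ stay bounded uniformly in $n$ --- in fact $\mathrm{Var}(\mu'_n(k,c))\le \Sigma_0(k,c)-\Sigma'_n(k,c)$, and $\E[\mu'_n(k,c)]$ converges to a finite limit because with finitely many contexts, each other context sampled $\Theta(n)$ times, the other-context posterior at $c$ stabilizes. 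Plugging in $N = n\,p(k,c)$, this yields $\E[\mu_n(k,c)]\to F(k,c)$ and $n\,\mathrm{Var}(\mu_n(k,c)) = \alpha_n^2\sigma^2(k,c)/p(k,c) + O(1/n)\to \sigma^2(k,c)/p(k,c)$, i.e.\ exactly the mean and scaled variance of the sample mean $\bar{Y}_{n p(k,c)}(k,c)$.

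\textbf{Conclusion.} Combining the two alternatives and using their independence, for every $\theta\in\mathbb{R}$,
\begin{equation*}
  \tfrac1n\log\E\!\left[e^{\theta n Z_n}\right] \longrightarrow \Lambda(\theta) := \theta\,\Delta + \tfrac{\theta^2}{2}\,v, \quad \Delta := F(\pi^*(c),c)-F(k,c)>0, \quad v := \tfrac{\sigma^2(\pi^*(c),c)}{p(\pi^*(c),c)} + \tfrac{\sigma^2(k,c)}{p(k,c)}.
\end{equation*}
The limit $\Lambda$ is finite and differentiable, so G\"{a}rtner--Ellis applies to $Z_n$; since $\Lambda^*(x) = (x-\Delta)^2/(2v)$ is decreasing on $(-\infty,\Delta)$, the upper bound over $(-\infty,0]$ and the lower bound over $(-\infty,0)$ both equal $-\Lambda^*(0) = -\Delta^2/(2v)$, and because $Z_n$ has a density, $P(Z_n<0)=P(Z_n\le 0)$. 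This gives $\tfrac1n\log P(\mu_n(\pi^*(c),c)<\mu_n(k,c))\to -\Delta^2/(2v) = -G_{(k,c)}(p(\pi^*(c),c),p(k,c))$. (Equivalently, one may bypass G\"{a}rtner--Ellis entirely: write $P(Z_n<0)=\Phi\!\left(-\E[Z_n]/\sqrt{\mathrm{Var}(Z_n)}\right)$, note $\E[Z_n]/\sqrt{\mathrm{Var}(Z_n)}\sim \sqrt{n}\,\Delta/\sqrt{v}\to\infty$, and apply $\log\Phi(-x)\sim -x^2/2$.)

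\textbf{Main obstacle.} The delicate point is not the large-deviations bookkeeping --- that is a routine adaptation of Glynn--Juneja once the cumulant generating function is identified --- but verifying that the other-context contribution $(1-\alpha_n)\mu'_n(k,c)$ is genuinely negligible: one must establish $1-\alpha_n = O(1/n)$, which requires that $\Sigma'_n(k,c)$ stay bounded below (the other contexts cannot pin down $F(k,c)$ on their own), together with uniform-in-$n$ bounds on the mean and variance of $\mu'_n(k,c)$. These facts are precisely what the decomposition of the GP update formulas is there to supply.
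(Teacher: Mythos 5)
Your proposal is correct, and it shares the paper's core strategy — show that for large-deviations purposes $\mu_n(k,c)$ behaves like the sample mean of the $N_n(k,c)=np(k,c)$ observations at $(k,c)$, then conclude along Glynn--Juneja lines — but it executes both halves differently. For the reduction, the paper processes the GP update context by context \emph{starting} with $c$ (equation (\ref{eq-mean-breakdown})) and shows the weights $C_n(k,c,i)$ of all later contexts vanish; you condition on the other-context data \emph{first} and absorb the $c$-block through one univariate conjugate update, $\mu_n(k,c)=\alpha_n\bar{Y}_N(k,c)+(1-\alpha_n)\mu'_n(k,c)$ with $1-\alpha_n=O(1/n)$. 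This two-block decomposition is cleaner and makes the independence structure explicit, but your rate for $1-\alpha_n$ needs $\Sigma'_n(k,c)$ bounded away from zero, i.e.\ a nondegenerate prior covariance over contexts as in Assumption \ref{assumption-consistency}(4); the paper's ordering needs a comparable nondegeneracy implicitly (its limit for $C_n(k,c,i)$, $i\ge 2$, divides by $\Sigma^{i-1}_{\infty}(c_i,c_i;k)$), so this is a shared caveat worth stating rather than a gap, since Theorem \ref{thm-pcs-rate-function} does not list it. For the endgame, the paper forms the bivariate scaled log-MGF of $(\mu_n(\pi^*(c),c),\mu_n(k,c))$, matches it to Lemma 1 of \cite{Glynn2004LargeDev}, and evaluates the infimum of the Fenchel--Legendre transform over $x_{\pi^*(c)}\ge x_k$; you exploit exact Gaussianity and independence (separate GPs, disjoint observations) to reduce to the scalar difference $Z_n$ and read off the rate from $\Phi(-\E[Z_n]/\sqrt{\mathrm{Var}(Z_n)})$ with the standard tail asymptotic — the same device the paper uses in proving Theorem \ref{theorem-convergence-rate}. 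Your route is more elementary and self-contained; the paper's stays parallel to \cite{Glynn2004LargeDev} and would extend to estimators for which exact normality of $Z_n$ is unavailable. Your remaining assertions (boundedness of $\E[\mu'_n(k,c)]$, the bound $\mathrm{Var}(\mu'_n(k,c))\le\Sigma_0(k,c)-\Sigma'_n(k,c)$ under the true sampling distribution, determinism of $\alpha_n$) all check out with short computations.
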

\begin{proof}
    We will follow the analysis of \cite{Glynn2004LargeDev} and use the Gartner-Ellis Theorem \cite{dembo1998Largedev} to find $G_{(k, c)} (p(\pi^*(c), c), p(k, c))$, which requires understanding the distributional behavior of $\mu_n(k, c)$. In particular, we need to study the limiting behavior of the log moment generating function (MGF):
    \begin{equation*}
        \Lambda_n(\lambda; k, c) = \log \E[\exp (\lambda \mu_n(k, c)) ].
    \end{equation*}
    
    Using the conjugacy property of GPs (under the assumption of Gaussian observation noise with known variance) and updating the posterior using samples from one context at a time, we can decompose $\mu_n(k, c)$ as
    \begin{equation*}
        \mu_n(k, c) = \mu_0(k, c) + \sum_{i=1}^{|\mathcal{C}|} [\Sigma_n^{i-1}(c, c_i; k)]_{1 \times N_n(k, c_i)} (A^{i})^{-1} (Y_n(k, c_i) - [\mu_n^{i-1}(k, c_i)]_{N_n(k, c_i) \times 1}),
    \end{equation*}
    where $\mu_n^{i-1}(k, c_i)$ is defined in the same way except with the summation being from $1$ to $i-1$ with $\mu_n^0(\cdot, \cdot) = \mu_0(\cdot, \cdot)$,
    $[\alpha]_{j \times m}$ denotes the $j \times m$ matrix where each element is $\alpha$, \linebreak
        $A^{i} = [\Sigma_n^{i-1}(c_i, c_i; k)]_{N_n(k, c_i) \times N_n(k, c_i)} + diag_{N_n(k, c_i)}(\sigma^2(k, c_i))$,
    with $diag_{N}(\beta)$ denoting the diagonal matrix of size $N \times N$ with diagonals $\beta$, $Y_n(k, c_i)$ denotes the $N_n(k, c_i) \times 1$ matrix of observations corresponding to $k, c_i$, and 
    \begin{equation*}
        \Sigma_n^{i}(c, c'; k) = \Sigma_n^{i-1}(c, c'; k) - [\Sigma_n^{i-1}(c, c_{i}; k)]_{1 \times N_n(k, c_{i})} (A^{i})^{-1} [\Sigma_n^{i-1}(c_{i}, c'; k)]_{N_n(k, c_{i}) \times 1},
    \end{equation*}
    with $\Sigma_n^0(\cdot, \cdot; k) = \Sigma_0(\cdot, \cdot; k)$. The inverse of $A^i$ can be calculated in closed form using the Sherman-Morrison formula \cite{Meyer2000MatrixAnalysis}. After some algebra,
    we can rewrite $\mu_n(k, c)$ as follows:
    \begin{equation}\label{eq-mean-breakdown}
        \mu_n(k, c) = \mu_0(k, c) + \sum_{i=1}^{|\mathcal{C}|} \frac{N_n(k, c_i) (\widebar{Y}_n(k, c_i) - \mu_n^{i-1}(k, c_i)) \Sigma_n^{i-1}(c, c_i; k)}{\sigma^2(k, c_i) + N_n(k, c_i) \Sigma_n^{i-1}(c_i, c_i; k)},
    \end{equation}
    where $\widebar{Y}_n(k, c_i)$ denotes the average of the observations. Similarly, we can rewrite $\Sigma_n^{i}(c, c'; k)$ as:
    \begin{equation*}
        \Sigma_n^i(c, c'; k) = \Sigma_n^{i-1}(c, c'; k) - \frac{N_n(k, c_{i}) \Sigma_n^{i-1}(c, c_i; k) \Sigma_n^{i-1}(c_i, c'; k)}{\sigma^2(k, c_i) + N_n(k, c_i) \Sigma_n^{i-1}(c_i, c_i; k)}.
    \end{equation*}
    
    For a Gaussian random variable $\mathcal{N}(\Tilde{\mu}, \Tilde{\sigma}^2)$, the log-MGF is given by $\Tilde{\mu} \lambda + \Tilde{\sigma}^2 \lambda^2 / 2$. Since the true distribution of samples is $y(k, c) \sim \mathcal{N}(F(k, c), \sigma^2(k, c))$
    and the samples are independent of each other, we can view $\mu_n(\cdot, \cdot)$ as a linear combination of independent Gaussian random variables and write the log-MGF
    \begin{equation*}
        \Lambda_n(\lambda; k, c) = \mu_0(k, c) \lambda + \sum_{i=1}^{|\mathcal{C}|} \left[ (F(k, c_i) - \mu_n^{i-1}(k, c_i))C_n(k, c, i) \lambda + \frac{\sigma^2(k, c_i) C_n(k, c, i)^2 \lambda^2}{2 N_n(k, c_i)} \right],
    \end{equation*}
    where
    \begin{equation*}
        C_n(k, c, i) = \frac{N_n(k, c_i) \Sigma_n^{i-1}(c, c_i; k)}{\sigma^2(k, c_i) + N_n(k, c_i) \Sigma_n^{i-1}(c_i, c_i; k)}.
    \end{equation*}
    Let $\Lambda_n(\lambda_{\pi^*(c)}, \lambda_k; c)$ denote the log-MGF of $Z_n = (\mu_n(\pi^*(c), c), \mu_n(k, c))$.
    In order to use the Gartner-Ellis Theorem, we need to establish the limiting behavior of $\frac{1}{n} \Lambda_n(n \lambda_{\pi^*(c)}, n \lambda_k; c)$.
    \begin{equation}\label{eq-limiting-rate-part-1}
        \lim_{n \rightarrow \infty} \frac{1}{n} \Lambda_n(n \lambda_{\pi^*(c)}, n \lambda_k; c) = \sum_{\kappa \in (\pi^*(c), k)} \lim_{n \rightarrow \infty} \E[\mu_n(\kappa, c)] \lambda_{\kappa} + \lim_{n \rightarrow \infty} \frac{n Var(\mu_n(\kappa, c)) \lambda_{\kappa}^2}{2}.
    \end{equation}
    
    Let us start with the variance term. In the following, we use $\rightarrow$ to denote the limit as $n \rightarrow \infty$ and $\xrightarrow{\approx}$ to denote equivalence in the limit. Note that
    \begin{equation*}
        Var(\mu_n(k, c)) = \sum_{i=1}^{|\mathcal{C}|} \frac{\sigma^2(k, c_i) C_n(k, c, i)^2}{N_n(k, c_i)}.
    \end{equation*}
    Due to conjugacy of GPs, we can choose to process the summation in any order, as long as we follow the same order for updating $\Sigma_n^i(\cdot, \cdot; k)$. Let us analyze $Var(\mu_n({\pi^*(c)}, c))$, for a given $c$, with the summation and the update processed starting from $c$, i.e., using $c_1 = c$ with an appropriate re-ordering of $\mathcal{C}$. Note that
    \begin{equation*}
        \Sigma_n^i(c', c''; k) \rightarrow \Sigma_n^{i-1}(c', c''; k) - \frac{\Sigma_n^{i-1}(c', c_i; k) \Sigma_n^{i-1}(c_i, c''; k)}{\Sigma_n^{i-1}(c_i, c_i; k)},
    \end{equation*}
    which implies that $\Sigma_n^i(\cdot, c_1; k) \rightarrow 0, i \geq 1$, and $C_n(k, c', i) \rightarrow \frac{\Sigma_0(c', c_i; k)}{\Sigma_0(c_i, c_i; k)}$ if $i=1$ and $C_n(k, c', i) \rightarrow 0$ otherwise. Thus, we can ignore the rest of the terms in the summation and write
    \begin{equation*}
        Var(\mu_n(k, c)) \xrightarrow{\approx} \frac{\sigma^2(k, c)}{N_n(k, c)} = \frac{\sigma^2(k, c)}{n p(k, c)}. 
    \end{equation*}
    
    Similarly for the expectation term, using $c_1 = c$, since $\Sigma_n^i(\cdot, c_1; k) \rightarrow 0, i \geq 1$, in the limit equation (\ref{eq-mean-breakdown}) becomes
    \begin{equation*}
        \mu_n(k, c) \xrightarrow{\approx} \mu_0(k, c) + (\widebar{Y}_n(k, c_1) - \mu_n^0(k, c_1)) = \widebar{Y}_n(k, c),
    \end{equation*}
    which implies that $\lim_{n \rightarrow \infty} \E[\mu_n(k, c)] = F(k, c)$.
    We are now ready to continue from (\ref{eq-limiting-rate-part-1}). Let $\Lambda^t(\lambda_k; k, c)$ denote the log-MGF of the observation $y(k, c) \sim \mathcal{N}(F(k, c), \sigma^2(k, c))$.
    \begin{equation*}
        \begin{aligned}
            \lim_{n \rightarrow \infty} & \frac{1}{n} \Lambda_n(n \lambda_{\pi^*(c)}, n \lambda_k) = \sum_{\kappa \in (\pi^*(c), k)} F(\kappa, c) \lambda_{\kappa} + \frac{\sigma^2(\kappa, c) \lambda_{\kappa}^2}{2 p(\kappa, c)} \\
            &= \sum_{\kappa \in (\pi^*(c), k)} p(\kappa, c) \left( \frac{F(\kappa, c) \lambda_{\kappa}}{p(\kappa, c)} + \frac{\sigma^2(\kappa, c) \lambda_{\kappa}^2}{2 p(\kappa, c)^2} \right) = \sum_{\kappa \in (\pi^*(c), k)} p(\kappa, c) \Lambda^t(\lambda_{\kappa} / p(\kappa, c); \kappa, c),
        \end{aligned}
    \end{equation*}
    which is the exact term in Lemma 1 of \cite{Glynn2004LargeDev}. Following the steps therein, we find that the rate function of $Z_n$ is given by
    \begin{equation*}
        I(x_{\pi^*(c)}, x_k) = p({\pi^*(c)}, c) I^{t}(x_{\pi^*(c)}; {\pi^*(c)}, c) + p(k, c) I^{t}(x_k; k, c),
    \end{equation*}
    where $I^{t}(x_k; k, c) = \frac{(x_k - F(k, c))^2}{2 \sigma^2(k, c)}$ is the Fenchel-Legendre transform of $\Lambda^t(\lambda_k / p(k, c); k, c)$.
    With the rate function of $Z_n$ established,
    \cite{Glynn2004LargeDev} shows that
    \begin{equation*}
        G_{(k, c)} (p({\pi^*(c)}, c), p(k, c)) = \inf_{x_{{\pi^*(c)}} \geq x_k} \left[ p({\pi^*(c)}, c) I^{t}(x_{\pi^*(c)}; {\pi^*(c)}, c) + p(k, c) I^{t}(x_k; k, c) \right],
    \end{equation*}
    where the infimum can be calculated via differentiation \cite{Gao2019Covariates}, giving us
    \begin{equation*}
        G_{(k, c)} (p({\pi^*(c)}, c), p(k, c)) = \frac{(F({\pi^*(c)}, c) - F(k, c))^2}{2(\sigma^2({\pi^*(c)}, c) / p({\pi^*(c)}, c) + \sigma^2(k, c) / p(k, c))}.
    \end{equation*}
\end{proof}


With the lemma established, the theorem can be proved as follows.
\begin{proof}[Proof of Theorem \ref{thm-pcs-rate-function}]
For a given context $c$, the probability of false selection after $n$ observations $PFS^n(c) = 1 - PCS^n(c)$ is given by 
\begin{equation*}
    PFS^n(c) = P(\mu_n(\pi^*(c), c) < \mu_n(k, c), \exists k \neq \pi^*(c)).
\end{equation*}
We can lower and upper bound this respectively by
\begin{equation*}
    \max_{k \neq \pi^*(c)} P(\mu_n(\pi^*(c), c) < \mu_n(k, c)) \text{ and } (|\mathcal{K}| - 1) \max_{k \neq \pi^*(c)} P(\mu_n(\pi^*(c), c) < \mu_n(k, c)).
\end{equation*}
If for $k \neq \pi^*(c)$, 
\begin{equation*}
    \lim_{n \rightarrow \infty} \frac{1}{n} \log P(\mu_n(\pi^*(c), c) < \mu_n(k, c)) = -G_{(k, c)} (p(\pi^*(c), c), p(k, c))
\end{equation*}
for some rate function $G_{(k, c)}$, then
\begin{equation*}
    \lim_{n \rightarrow \infty} \frac{1}{n} \log PFS^n(c) = - \min_{k \neq \pi^*(c)} G_{(k, c)} (p(\pi^*(c), c), p(k, c)).
\end{equation*}
Similarly, both the $PFS^n_E$ and $PFS^n_M$ can be lower and upper bounded by $\max_c PFS^n(c)$ and $(|\mathcal{K}| - 1) \max_c PFS^n(c)$ (or with an additional constant factor $\max_c w(c) / \min_c w(c)$ if $w(c)$ are not uniform), respectively. Thus, we can extend this to write the rate function of contextual $PCS$ as
\begin{equation*}
    \lim_{n\rightarrow\infty} \frac{1}{n} \log PFS^n_{\sim} = - \min_{c \in \mathcal{C}} \min_{k \neq \pi^*(c)} G_{(k, c)} (p(\pi^*(c), c), p(k, c)),
\end{equation*}
where $PFS^n_{\sim}$ is either of $PFS^n_E$ or $PFS^n_M$. Combining this with Lemma \ref{lemma-rate-func-term}, we get the result.
\begin{equation*}
    \lim_{n\rightarrow\infty} \frac{1}{n} \log PFS^n_{\sim} = - \min_{c \in \mathcal{C}} \min_{k \neq \pi^*(c)} \frac{(F({\pi^*(c)}, c) - F(k, c))^2}{2(\sigma^2({\pi^*(c)}, c) / p({\pi^*(c)}, c) + \sigma^2(k, c) / p(k, c))}.
\end{equation*}
\end{proof}

\section{Proofs of Lemmas and Theorem \ref{theorem-infinitely-sample}}
Throughout this section, we use the notation $N_{\infty}(k, c) = \lim_{n \to \infty} N_n(k, c)$. 
We use $\pi^{\infty}(c)$ to denote the best alternative reported by the algorithm as the number of samples goes to infinity, whose existence follows from the existence of the limiting GP given by the Proposition 2.9 of \cite{bect2019GPSupermartingale}.
In addition, we alter the definition of $\hat{p}_n(k, c)$, without affecting the algorithm behavior, to be the fraction of samples allocated to alternative $k$ under context $c$ as the fraction of total samples allocated to context $c$, i.e., $\hat{p}_n(k, c) = \frac{N_n(k, c)}{\sum_{k'}N_n(k', c)}$, which differs from the original definition in that the denominator lacks the summation over contexts. This ensures that at least one of $\psi^{(1)}(c)$ and $\psi^{(2)}(c)$ (defined in Algorithm \ref{alg:gp-c-ocba}) is always strictly positive and helps avoid technical difficulties in proving the results.

\subsection{Proof of Lemma \ref{lemma-posterior-correlation}}
\begin{proof}
    We will show this by contradiction. It is trivial to show that if $Corr_n(c, c'; k) = \pm 1$, then $Corr_{n'}(c, c'; k) = \pm 1$ for all $n' \geq n$. Thus, it is sufficient to show that we cannot have $\lim_{n \rightarrow \infty} Corr_n(c, c'; k) = \pm 1$.
    
    Suppose that we have $n_e > 0$ samples from all $c \in \mathcal{C}$. Note that this is without loss of generality since we can always add more samples to ensure that each context receives the same number of samples (by the first argument, this will not change the result). 
    We can group the samples into $n_e$ vector observations where each vector follows multivariate normal distribution with covariance matrix $diag([\sigma^2(k, c)]_{c \in \mathcal{C}})$. 
    The crucial realization at this step is that, given only full vector observations, the GP posterior update formulas are equivalent to the well known formulas for the posterior of a multivariate normal distribution with known covariance, where the prior for the unknown mean is another multivariate normal distribution. That is, 
    \begin{align*}
        \Sigma_{n_e |\mathcal{C}|}(\mathcal{C}, \mathcal{C}; k) &= \left( \Sigma_0(\mathcal{C}, \mathcal{C}; k)^{-1} + n_e  diag([\sigma^2(k, c)]_{c \in \mathcal{C}})^{-1} \right)^{-1} \\
        &\xrightarrow{\approx} \frac{1}{n_e} diag([\sigma^2(k, c)]_{c \in \mathcal{C}}).
    \end{align*}
    This implies that $Corr_{n_e |\mathcal{C}|}(c, c'; k) \rightarrow 0$ as $n_e \rightarrow \infty$ for all $c \neq c'$. Thus, it must be the case that $-1 < Corr_n(c, c'; k) < 1, \forall n \geq 0, c \neq c'$.
\end{proof}

\subsection{Proof of Lemma \ref{lemma-posterior-variance}}
\begin{proof}
    Proposition 2.9 of \cite{bect2019GPSupermartingale} shows that for any sequential design strategy, the GP posterior converges almost surely to a limiting distribution, which is also a GP. 
    Let $\Sigma_{\infty}(k, c)$ denote the variance and $\Sigma_{\infty}(c, c'; k)$ the covariance of the limiting GP. Recall from the proof of Theorem \ref{thm-pcs-rate-function} that
    \begin{equation*}
        \Sigma_n(c, c'; k) = \Sigma_0(c, c'; k) - \sum_{i = 1}^{|\mathcal{C}|} \frac{N_n(k, c_i) \Sigma^{i-1}_n(c, c_i; k) \Sigma^{i-1}_n(c_i, c'; k)}{\sigma^2(k, c_i) + N_n(k, c_i) \Sigma^{i-1}_n(c_i, c_i; k)};
    \end{equation*}
    where
    \begin{equation*}
        \Sigma^{i}_n(c, c'; k) = \Sigma^{i-1}_n(c, c'; k) - \frac{N_n(k, c_i) \Sigma^{i-1}_n(c, c_i; k) \Sigma^{i-1}_n(c_i, c'; k)}{\sigma^2(k, c_i) + N_n(k, c_i) \Sigma^{i-1}_n(c_i, c_i; k)}.
    \end{equation*}
    It is easy to see that if $N_n(k, c_i) \to \infty$,
    \begin{equation*}
    \begin{aligned}
        \Sigma^i_n(c_i, c; k) &\to \Sigma^{i-1}_n(c_i, c; k) - \frac{\Sigma^{i-1}_n(c_i, c_i; k) \Sigma^{i-1}_n(c_i, c; k)}{\Sigma^{i-1}_n(c_i, c_i; k)} \\
        &\qquad= \Sigma^{i-1}_n(c_i, c; k) - \Sigma^{i-1}_n(c_i, c; k) = 0.
    \end{aligned}
    \end{equation*}
    Thus, $\Sigma_n(c_i, c; k) \to 0, \forall c \in \mathcal{C}$ and $\Sigma_{\infty}(k, c_i) = \lim_{n \to \infty} \Sigma_n(c_i, c_i; k) = 0$, if $N_n(k, c_i) \to \infty$. Note that the above convergence is only valid if $\Sigma^{i-1}_n(c_i, c; k) > 0$. The other alternative is $\Sigma^{i-1}_n(c_i, c; k) = 0$, in which case the same result trivially holds.

    Suppose that $(k, c')$ gets sampled only finitely often, i.e., $N_{\infty}(k, c') < \infty$.
    Noting that $\Sigma_n(c, c'; k) = \Sigma_n^{|\mathcal{C}|}(c, c'; k)$, it is sufficient to show that $\lim_{n \rightarrow \infty} \Sigma^i_n(c', c'; k) > 0, \forall i = 1, \ldots, |\mathcal{C}|$. We will show this via induction. For the base case, we have $\lim_{n \rightarrow \infty} \Sigma^0_n(c', c'; k) = \Sigma_0(k, c') > 0$. Using $\Sigma^i_{\infty}(\cdot, \cdot; \cdot) := \lim_{n \rightarrow \infty} \Sigma^i_n(\cdot, \cdot; \cdot)$, suppose that $\Sigma^{i-1}_{\infty}(c', c'; k) > 0$, then
    \begin{equation*}
    \begin{aligned}
        \Sigma^{i}_{\infty}&(c', c'; k) = \Sigma^{i-1}_{\infty}(c', c'; k) - \frac{N_{\infty}(k, c_i) \Sigma^{i-1}_{\infty}(c', c_i; k) \Sigma^{i-1}_{\infty}(c_i, c'; k)}{\sigma^2(k, c_i) + N_{\infty}(k, c_i) \Sigma^{i-1}_{\infty}(c_i, c_i; k)} \\
        &= \frac{ \Sigma^{i-1}_{\infty}(c', c'; k) \sigma^2(k, c_i) + N_{\infty}(k, c_i) \left( \Sigma^{i-1}_{\infty}(c', c'; k) \Sigma^{i-1}_{\infty}(c_i, c_i; k) - \Sigma^{i-1}_{\infty}(c', c_i; k) \Sigma^{i-1}_{\infty}(c_i, c'; k) \right)}{ \sigma^2(k, c_i) + N_{\infty}(k, c_i) \Sigma^{i-1}_{\infty}(c_i, c_i; k) }.
    \end{aligned}
    \end{equation*}
    Note that $\Sigma^{i}_n(\cdot, \cdot; \cdot)$ denotes the conditional covariance with conditioning on a certain subset of the observations. Thus, $|\Sigma^{i-1}_n(c', c_i; k)| \leq \sqrt{\Sigma^{i-1}_n(c', c'; k) \Sigma^{i-1}_n(c_i, c_i; k)}$, and it follows that if $N_{\infty}(k, c_i) < \infty$,
    \begin{equation*}
        \Sigma^{i}_{\infty}(c', c'; k) \geq \frac{\Sigma^{i-1}_{\infty}(c', c'; k) \sigma^2(k, c_i)}{\sigma^2(k, c_i) + N_{\infty}(k, c_i) \Sigma^{i-1}_{\infty}(c_i, c_i; k)} > 0.
    \end{equation*}
    We need to also show the case where $N_{\infty}(k, c_i) = \infty$. Note that the result we showed at the first part is $\Sigma_{\infty}^i(c_i, c'; k) = 0$, not $\Sigma_{\infty}^{i-1}(c_i, c'; k) = 0$, which we need to work with here. If $\Sigma_{\infty}^{i-1}(c_i, c'; k) = 0$, the result follows immediately. Assuming $\Sigma_{\infty}^{i-1}(c_i, c'; k) > 0$,
    \begin{align*}
        \Sigma_{\infty}^i (c', c'; k) &=  \frac{ \Sigma^{i-1}_{\infty}(c', c'; k) \Sigma^{i-1}_{\infty}(c_i, c_i; k) - \Sigma^{i-1}_{\infty}(c', c_i; k) \Sigma^{i-1}_{\infty}(c_i, c'; k) }{ \Sigma^{i-1}_{\infty}(c_i, c_i; k) }.
    \end{align*}
    If $|\Sigma^{i-1}_n(c', c_i; k)| < \sqrt{\Sigma^{i-1}_n(c', c'; k) \Sigma^{i-1}_n(c_i, c_i; k)}$, $\Sigma_{\infty}^i (c', c'; k)$ above is strictly positive as needed. Note that this is given by Lemma \ref{lemma-posterior-correlation}. Thus, the result holds.
\end{proof}

\subsection{Proof of Lemma \ref{lemma-single-context-infinite-samples}}
\begin{proof}
    We will prove this in two parts. First, we will show that the best predicted alternative $\pi^{\infty}(c)$ must get sampled infinitely often. We will then show that all other alternatives must also get sampled infinitely often. The proof relies heavily on Lemma \ref{lemma-posterior-variance}, which ensures the existence of the scenarios constructed below.
    
    Suppose that $(\pi^{\infty}(c), c)$ does not get sampled infinitely often, i.e., $N_{\infty}(\pi^{\infty}(c), c) < \infty$. Let $k' \neq \pi^{\infty}(c)$ denote an alternative such that $N_{\infty}(k', c) = \infty$, which must exist since $\sum_{k \in \K} N_{\infty}(k, c) = \infty$. 
    There exists some $M < \infty$, after which $(\pi^{\infty}(c), c)$ no longer gets sampled, some $0 < \delta < 1$, and some $n > M$ where the algorithm chooses to sample $(k', c)$ such that
    \begin{itemize}
        \item $\pi^n(c) = \pi^\infty(c)$;
        \item $\hat{p}_n(\pi^{\infty}(c), c) < \delta$ and $\Sigma_n(\pi^{\infty}(c), c) \geq \Sigma_{\infty}(\pi^{\infty}(c), c) > \delta$;
        \item $\hat{p}_n(k', c) > \delta$ and $\Sigma_n(k', c) < \delta$.
    \end{itemize}
    Note that in this case, we have $\psi^{(1)}(c) < 1$ and $\psi^{(2)}(c) > 1$. Thus $\psi^{(1)}(c) < \psi^{(2)}(c)$ and the algorithm must sample from $(\pi^{\infty}(c), c)$ instead, leading to a contradiction.
    
    
    Suppose that the opposite happens, i.e., only the predicted best alternative $(\pi^{\infty}(c), c)$ receives infinitely many samples (among all the alternatives for context $c$). In this case, we would have $\psi^{(1)}(c) \rightarrow \infty$ and $\psi^{(2)}(c) \rightarrow 0$, which would also lead to a contradiction. More precisely, there exists some $M < \infty$ such that after iteration $M$, among all the pairs with context $c$, only $(\pi^{\infty}(c), c)$ receives samples. Let $0 < \delta < 1 / |\mathcal{C}|$ be a constant, and $n > M$ be an iteration where context $c$ (or the pair $(\pi^{\infty}(c), c)$) receives another sample such that
    \begin{itemize}
        \item $\pi^n(c) = \pi^{\infty}(c)$;
        \item $\hat{p}_n(\pi^{\infty}(c), c) > |\mathcal{C}| \delta$ and $\Sigma_n(\pi^{\infty}(c), c) < \delta$;
        \item $\hat{p}_n(k, c) < \delta$ and $\Sigma_n(k, c) > \delta$ for all $c \neq \pi^{\infty}(c)$.
    \end{itemize}
    In this case, $\psi^{(1)}(c) > |\mathcal{C}|$ and $\psi^{(2)}(c) < |\mathcal{C}| - 1$. Thus, $\psi^{(1)}(c) > \psi^{(2)}(c)$, which violates the condition for $(\pi^{\infty}(c), c)$ to get sampled from. We have a contradiction and some other alternative for context $c$ must also get sampled infinitely often.
    
    
    Now, suppose that a certain subset of non-optimal alternatives does not get sampled infinitely often, denoted by $\K^{fin} := \{k \neq \pi^{\infty}(c): N_{\infty}(k, c) < \infty \}$. 
    By Proposition 2.9 of \cite{bect2019GPSupermartingale}, the GP posterior converges almost surely to a limiting GP. Denoting the limiting posterior mean by $\mu_{\infty}(k, c)$, for any $\delta > 0$ there exists some $M < \infty$ such that
    \begin{equation*}
        P(\mu_{\infty}(k, c) - \delta < \mu_n(k, c) < \mu_{\infty}(k, c) + \delta, \forall k \in \K, n > M) = 1.
    \end{equation*}
    Let $d_{min} = \min_{k \neq \pi^{*}(c)} \mu_{\infty}(\pi^{*}(c), c) - \mu_{\infty}(k, c) - 2 \delta$ and $d_{max} = \max_{k \neq \pi{*}(c)} \mu_{\infty}(\pi^{*}(c), c) - \mu_{\infty}(k, c) + 2 \delta$, and note that $d_{min} < \mu_n(\pi^n(c), c) - \mu_n(k, c) < d_{max}$ for all $k \neq \pi^n(c)$ almost surely for all $n > M$. Let $\kappa = 2 * (d_{max} / d_{min})^2$. There exists some $\delta > 0$ and $n > M$ where the algorithm samples from context $c$ such that
    \begin{itemize}
        \item $\pi^{n}(c) = \pi^{\infty}(c)$;
        \item $\psi^{(1)}(c) > \psi{(2)}(c)$;
        \item $\Sigma_n(k, c) > \delta$ for all $k \in \K'$;
        \item $\Sigma_n(k, c) < \delta / \kappa$ for all $k \notin \K'$.
    \end{itemize}
    In this case, for $k \notin \K', k \neq \pi^n(c)$, we have
    \begin{equation*}
        \zeta(k, c) = \frac{(\mu_n(\pi^n(c), c) - \mu_n(k, c))^2}{\Sigma_n(\pi^n(c), c) + \Sigma_n(k, c)} > \frac{d_{min}^2}{\delta / (d_{\max} / d_{\min})^2} = \frac{d_{\max}^2}{\delta},
    \end{equation*}
    and for $k \in \K'$, we have
    \begin{equation*}
        \zeta(k, c) = \frac{(\mu_n(\pi^n(c), c) - \mu_n(k, c))^2}{\Sigma_n(\pi^n(c), c) + \Sigma_n(k, c)} < \frac{d_{\max}^2}{\delta}
    \end{equation*}
    almost surely. Thus, the minimizer of $\zeta(k, c)$ and the next alternative to get sampled must belong to $\K'$, which leads to a contradiction. 
    
    We have established that if a context gets sampled from infinitely often, then best predicted alternative must get sampled infinitely often, some non-best predicted alternative must get sampled infinitely often, and all other alternatives must get sampled from infinitely often almost surely. Thus, all alternatives for this context must get sampled from infinitely often almost surely, concluding the proof.
\end{proof}

\subsection{Proof of Theorem \ref{theorem-infinitely-sample}}
\begin{proof}
    We will follow similar arguments to Lemma \ref{lemma-single-context-infinite-samples} to prove this result. Since $\sum_{k, c} N_{\infty}(k, c) = \infty$, there must be some contexts that have been sampled from infinitely often. Let $\mathcal{C}^{fin} := \{c \in \mathcal{C}: \sum_{k} N_{\infty}(k, c) < \infty\}$ and $\mathcal{C}^{inf} := \mathcal{C} \setminus \mathcal{C}^{fin}$. By Lemma \ref{lemma-single-context-infinite-samples}, we know that $N_{\infty}(k, c) = \infty$ for all $k \in \K, c \in \mathcal{C}^{inf}$ almost surely.
    
    Following the arguments in Lemma \ref{lemma-single-context-infinite-samples}, for some $\delta > 0$, let $d_{min} = \min_{k \neq \pi^{*}(c), c \in \mathcal{C}} \mu_{\infty}(\pi^{*}(c), c) - \mu_{\infty}(k, c) - 2 \delta$ and $d_{max} = \max_{k \neq \pi{*}(c), c \in \mathcal{C}} \mu_{\infty}(\pi^{*}(c), c) - \mu_{\infty}(k, c) + 2 \delta$. Note that $d_{min} < \mu_n(\pi^n(c), c) - \mu_n(k, c) < d_{max}$ for all $k \neq \pi^n(c), c \in \mathcal{C}$ almost surely for all $n > M$, and some $M < \infty$. Pick $M$ large enough so that no $c \in \mathcal{C}^{fin}$ gets sampled from after iteration $M$. Let  $\kappa = 2 * (d_{max} / d_{min})^2$.
    Pick some $\delta > 0$ and $n > M$ (where some $c \in \mathcal{C}^{inf}$ gets sampled from) such that
    \begin{itemize}
        \item $\Sigma_n(k, c) > \delta / 2$ for all $k \in \K, c \in \mathcal{C}^{fin}$;
        \item $\Sigma_n(k, c) < \delta / \kappa$ for all $k \in \K, c \in \mathcal{C}^{inf}$.
    \end{itemize}
    In this case, we have that $\zeta(k, c) > \frac{d_{max}^2}{\delta}$ for $k \neq \pi^{n}(c), c \in \mathcal{C}^{inf}$ and $\zeta(k, c) < \frac{d_{max}^2}{\delta}$ for $k \neq \pi^{n}(c), c \in \mathcal{C}^{fin}$. Thus, the context minimizing $\zeta(k, c)$ is in $\mathcal{C}^{fin}$ almost surely, leading to a contradiction. We can conclude that GP-C-OCBA must sample from each alternative-context pair infinitely often, almost surely.
\end{proof}

\subsection{Proof of Lemma \ref{lemma-posterior-mean-consistency}}
\begin{proof}
    Recall from derivation of the rate function that we can process the GP updates in any particular order thanks to the conjugacy of GPs. Repeating the steps therein with the updates processed starting with $(k, c)$, i.e. using $c_1 = c$, we get that
    \begin{equation*}
    \begin{aligned}
        \lim_{n \rightarrow \infty} \mu_{n}(k, c) &= \mu_0(k, c) + \frac{N_{\infty}(k, c) (\widebar{Y}_{\infty}(k, c) - \mu_0(k, c)) \Sigma_0(k, c)}{\sigma^2(k, c) + N_{\infty}(k, c) \Sigma_0(k, c)} \\
        &= \mu_0(k, c) + (\widebar{Y}_{\infty}(k, c) - \mu_0(k, c)) = \widebar{Y}_{\infty}(k, c),
    \end{aligned}
    \end{equation*}
    where $\widebar{Y}_{\infty}(k, c)$ denotes the average of all samples drawn from $(k, c)$. By the strong law of large numbers, since $N_{\infty}(k, c) = \infty$, $\widebar{Y}_{\infty}(k, c) = \lim_{n \rightarrow \infty} \widebar{Y}_n(k, c) = F(k, c)$ almost surely. It follows that $\lim_{n \rightarrow \infty} \mu_{n}(k, c) = F(k, c)$ almost surely.
\end{proof}

\end{document}